\providecommand{\U}[1]{\protect\rule{.1in}{.1in}}
\newtheorem{theorem}{Theorem}[section]
\newtheorem{proposition}[theorem]{Proposition}
\newtheorem{corollary}[theorem]{Corollary}
\newtheorem{example}[theorem]{Example}
\newtheorem{lemma}[theorem]{Lemma}
\newtheorem{final remark}[theorem]{Final Remark}
\newcommand {\R}{\mathbb{R}}
\newcommand {\K} {\mathbb{K}}
\newcommand {\N} {\mathbb{N}}
\newcommand{\norma}[1]{\| #1 \|}
\newcommand{\conj}[2]{\left \{ {#1} \, : \, {#2} \right \}}
\begin{document}

\title{Lattice isomorphic Banach lattices of polynomials}
\author{Christopher Boyd and Vinícius C. C. Miranda\thanks{Supported by FAPESP Grants 2023/12916-1 and 2025/08630-0, and Fapemig Grant APQ-01853-23\newline 2020 Mathematics Subject Classification: 46B42, 46B28, 46G25.\newline Keywords: Banach lattices, regular homogeneous polynomials, compact polynomials, weakly compact polynomials, orthogonally additive polynomials, regular nuclear polynomials}.}
\date{}
\maketitle

\begin{abstract}
  We study Díaz-Dineen's  problem for regular homogeneous vector-valued polynomials. In particular, we prove that if $E^*$ and $F^*$ are lattice isomorphic
  with at least one having order continuous norm, then $\mathcal{P}^r(^n E;
  G^*)$ and $\mathcal{P}^r(^n F; G^*)$ are lattice isomorphic for every $n\in
  \N$ and every Banach lattice $G$. We also study
  the analogous problem for the classes of regular
  compact, regular weakly compact, orthogonally additive
    and regular nuclear polynomials.
\end{abstract}

\section{Introduction and background}

In \cite{diazdineen}, J. C. Díaz and S. Dineen posed the following question: \textit{If $X$ and $Y$ are dual isomorphic Banach spaces, that is $X^\ast$ and $Y^\ast$ are isomorphic as Banach
    spaces,
    are $\mathcal{P}(^nX)$ and $\mathcal{P}(^n Y)$ isomorphic as Banach
    spaces
    ?} Although it is not known if the above question has a positive answer, some partial answers were presented in \cite{cabello, diazdineen, lassalezalduendo}.
In particular, J. C. D\'iaz and S. Dineen proved that the
problem has a positive solution whenever $X^\ast$ has both the Schur property and the approximation property (see \cite[Proposition 4]{diazdineen}). In \cite{cabello},
F. Cabello Sánchez, J. Castillo and R. García generalized Díaz-Dineen results
by providing a positive solution for this problem whenever $X$ is Arens-regular
or whenever $X$ and $Y$ are stable.
The same result was proven, independently, by S. Lassalle and I. Zalduendo in \cite{lassalezalduendo} who also addressed the same question concerning
other classes  of polynomials.
  Further, in \cite{carandolassale}, D. Carando and S.
  Lassalle extended these results to spaces of vector-valued homogeneous
polynomials.

Motivated by the increasing interest in studying homogeneous polynomials on Banach lattices (see, e.g., \cite{botelholuiz, botlumir, botmirru,  boydryansnig2, boydryansnig, boydryansnigtams, buquaterly, galmir1, libu}), the following question arises naturally: \textit{Given two dual isomorphic Banach lattices $E$ and $F$, that is $E^\ast$ and $F^\ast$ are lattice isomorphic, under which
  conditions are $\mathcal{P}^r(^n E)$ and $\mathcal{P}^r(^n F)$ lattice
  isomorphic}
  for every $n \in \N$? By adapting the proofs from \cite{carandolassale} and \cite{lassalezalduendo}, we were able to provide a positive answer whenever $E^*$ and $F^*$ have order continuous norms. In particular, we prove that if
$E^\ast$ and $F^\ast$ are lattice isomorphic with at least one (and hence the
  other) having order continuous norm, then
$\mathcal{P}^r(^n E; G^*)$ and $\mathcal{P}^r(^n F; G^*)$ are lattice isomorphic for every $n \in \N$ and every Banach lattice $G$. 
  We observe that, in our setting, the assumption of Arens regularity is replaced with the requirement that the norm on the dual is order continuous. This replacement is justified by the fact that a positive version of Arens regularity for $E$  is equivalent to the order continuity of the norm on $E^*$. Furthermore, as we will demonstrate in Proposition \ref{proparensreg}, positive Arens regularity coincides with classical Arens regularity and with
  positive symmetric Arens regularity, highlighting a noteworthy contrast with the Banach space case.

As an application, we
obtain conditions so that
$\mathcal{P}_{\cal K}^r(^n E; G^*)$ (resp. $\mathcal{P}_{\cal W}^r(^n E; G^*)$) and $\mathcal{P}_{\cal K}^r(^n F; G^*)$ (resp. $\mathcal{P}_{\cal W}^r(^n F; G^*)$) are isomorphic, where $\mathcal{P}_{\cal K}^r$ and $\mathcal{P}_{\cal W}^r$ denote, respectively, the classes of regular compact and regular weakly compact polynomials. We also consider the class of orthogonally additive polynomials.
In Section 3, we investigate this problem for a new
class of polynomials. Specifically, inspired by ideas from \cite{blancolal}, we define the class of regular nuclear $n$-homogeneous polynomials, denoted by $\mathcal{P}_{\cal N}^r$, and prove that if $G$ is Dedekind complete and has
the lattice approximation property (LAP), and $E^*$ and $F^*$ are lattice
isomorphic both having the LAP, then  $\mathcal{P}_{\cal N}^r(^n E; G)$ and
$\mathcal{P}_{\cal N}^r(^n F; G)$ are lattice isomorphic for every $n \in \N$.

Before proceeding, we introduce some definitions and notation.
Throughout this paper, $X$ and $Y$ denote Banach spaces, $E$ and $F$ denote Banach lattices, $E^+$, $B_E$ and $S_E$ denote, respectively, the positive cone, the closed unit ball, and the unit sphere of $E$. 
Given Banach spaces $X_1, \dots, X_n$ and $Y$, the Banach space of all continuous $n$-linear operators $A\colon X_1 \times \cdots \times X_n \to Y$ is denoted by $\mathcal{L}(X_1, \dots, X_n; Y)$. In the case that $F = \K$, we simply write $\mathcal{L}(X_1, \dots, X_n)$. Given Banach
lattices $E_1, \dots, E_n$ and $F$, a $n$-linear operator $A\colon E_1 \times \cdots \times E_n \to F$ is said to be positive if $A(x_1, \dots, x_n) \geq 0$ for all $x_1 \in E_1^+, \dots, x_n \in E_n^+$. The difference of two positive $n$-linear operators is called a regular $n$-linear operator, and the set of all regular $n$-linear operators from $E_1 \times \cdots \times E_n $ into $F$ is denoted by $\mathcal{L}^r(E_1, \dots, E_n; F)$. Whenever $F$ is Dedekind complete, $\mathcal{L}^r(E_1, \dots, E_n; F)$ is a Banach lattice with the regular norm $\norma{A}_r = \norma{|A|}$, where $|A|$ denotes the absolute value of the regular $n$-linear operator $A\colon E_1 \times \cdots \times E_n \to F$. The \textit{positive projective tensor product}, or the \textit{Fremlin tensor product} of $E_1, \dots, E_n$, denoted by $E_1 \widehat{\otimes}_{|\pi|} \cdots \widehat{\otimes}_{|\pi|} E_n$, is the Banach lattice generated by considering the completion of the vector lattice tensor product $E_1 \overline{\otimes} \cdots \overline{\otimes} \, E_n$ with respect the \textit{positive projective tensor norm}:
$$ \norma{u}_{|\pi|} = \inf \conj{\sum_{i=1}^k \norma{x_{i}^1}\cdots \norma{x_i^n}}{|u| \leq \sum_{i=1}^n x_i^1 \otimes \cdots \otimes x_i^n, \, x_i^j \in E_j^+}.
$$
For each $T \in \mathcal{L}^r(E_1, \dots, E_m; F)$, there exists a unique bounded linear operator $T^\otimes$ from  $E_1 \widehat{\otimes}_{|\pi|} \cdots \widehat{\otimes}_{|\pi|} E_n$ into $F$, called the linearization of $T$, such that $T^\otimes (x_1 \otimes \cdots \otimes x_n) = T(x_1, \dots, x_n)$ for all $x_1 \in E_1, \dots, x_n \in E_n$. Moreover, the map
$$ T \in \mathcal{L}^r(E_1, \dots, E_m; F) \mapsto T^\otimes \in \mathcal{L}^r(E_1 \widehat{\otimes}_{|\pi|} \cdots \widehat{\otimes}_{|\pi|} E_n; F) $$
is an isometric isomorphism and a lattice hommomorphism (see \cite[Proposition 3.3]{bubuskes}).

An $n$-homogeneous polynomial between Riesz spaces $P\colon E \to F$ is positive  if its associated symmetric multilinear operator $T_P\colon E^n \to F$ is positive. The difference of two positive $n$-homogeneous polynomials is called a regular homogeneous polynomial, and the set of such
polynomials is denoted by $\mathcal{P}^r(^nE, F)$. When $F$ is the scalar field we simply write $\mathcal{P}^r(^nE)$. If $E$ and $F$ are Banach lattices with $F$ Dedekind complete, then $\mathcal{P}^r(^nE, F)$ is a Banach lattice with the regular norm $\norma{P}_r = \norma{|P|}$, where $|P|$ denotes the absolute value of the regular $n$-homogeneous polynomial $P\colon E \to F$. 
The $n$-fold positive projective symmetric tensor product of $E$, denoted by
$\widehat{\otimes}_{n, s, |\pi|} E$,
is a Banach lattice endowed with the positive projective symmetric tensor
norm: 
$$
\norma{u}_{s, |\pi|} = \inf \conj{\sum_{i=1}^k \norma{x_{i}}^n}{|u| \leq
  \sum_{i=1}^k \otimes^n x_i, \, x_i \in E^+},
$$
where  $\otimes^n x = x \, \otimes \stackrel{n}{\cdots} \otimes \, x$ for every $x \in E$.  Let  $\theta_n\colon E \to \widehat{\otimes}_{n, s, |\pi|} E$ be the canonical $n$-homogeneous polynomial given by $\theta_n(x) =\otimes^n x$. We note that $\theta_n$
is a lattice homomorphism. For every $P \in \mathcal{P}^r(^n E, F)$ there
exists a unique regular linear operator $P^\otimes\colon \widehat{\otimes}_
{n, s,|\pi|} E \to  F$, called the linearization of $P$, such that $P(x) =
P^\otimes(\theta_n( x))$ for every $x \in E$. 
 The operator
$$\Phi\colon \mathcal{L}^r\left(\widehat{\otimes}_{n, s, |\pi|} E; F\right) \to \mathcal{P}^r(^n E; F)~,~ \Phi(T) = T \circ \theta_n, \quad T \in \mathcal{L}^r(\widehat{\otimes}_{n, s, |\pi|} E; F),
 $$ is an isometric isomorphism and a lattice homomorphism \cite[Proposition 3.4]{bubuskes}.

 We refer the reader to \cite{alip, meyer} for background on Banach lattices, to \cite{fabian} for Banach space theory, to \cite{dineen} for polynomials on Banach spaces, to \cite{bubuskes, loane} for positive/regular multilinear operators or polynomials, and to \cite{bubuskes, frem1, frem2}
for tensor products between vector lattices.

\section{Díaz-Dineen problem for regular polynomials}

We begin this section recalling that the \textit{first Arens adjoint} of $A \in \mathcal{L}(X_1, \cdots,X_n; Y)$ is the $n$-linear map $A^*\colon
F^* \times E_1 \times \cdots \times E_{n-1} \to E_n^*$ defined by 
$$ A^*(y^*, x_1, \dots, x_{n-1})(x_n) = y^*(A(x_1, \dots, x_n) ).$$
The second Arens adjoint $A^{**}$ of $A$ is the first Arens adjoint of $A^*$, that is the $n$-linear map $A^{**}\colon E_n^{**} \times F^* \times
  E_1 \times \cdots \times E_{n-2} \to E_{n-1}^*$
  given by
  $$ A^{**}(x_n^{**}, y^*, x_1, \dots, x_{n-1})(x_n^*) = x_n^{**}(A^*(y^*, x_1,
  \dots, x_{n-1})). $$
  This process is repeated until we obtain $\overline{A} : =  A^{(n+1)*}\colon
  E_1^{**}
  \times \cdots \times E_n^{**} \to F^{**}$, the $(n+1)$-\textit{Arens adjoint} of $A$  which is called the \textit{Arens-extension} of $A$ (see \cite[Section 2]{boydryansnig2} for more details). 
  For the remainder of the paper, we shall call this extension the
  Arens extension, bearing in mind that it coincides with its Aron-Berner extension
(see \cite{aroncolegamelin}). 
  Whenever $X = Y$, the \textit{transpose} of $A$ is the bilinear map
  $A^t\colon X \times X \to Z$ given by $A^t(x,y) = A(y,x)$ for all $x,y \in X$. Still under the assumption that $X = Y$, $A$ is said to be \textit{Arens-regular}
(or simply \textit{regular}) if $A^{t***t} = A^{***}$.
It is important to note that we have two different concepts of regularity for multilinear operators, one in the lattice sense presented in the Introduction and the one in the extension sense. It follows from \cite[Theorem 2.2]{ulger}
that a bilinear form $A\colon X \times X \to \R$ is Arens-regular if and
only if the bounded linear operator $S_A\colon X \to X^*$ given by $S_A(x)(y)
= A(x,y)$ is weakly compact.

Recall that a Banach space $X$ is called Arens-regular if all linear
operators from $X$ into $X^*$ are weakly compact. To our knowledge, the first
appearance of a positive version of Arens-regularity was in \cite{buskes}, when G. Buskes and R. Page gave necessary and sufficient conditions so that every positive bilinear form $E \times E \to \R$ is Arens-regular. The following result collects
the characterizations obtained so far by G. Buskes, R. Page, G. Emmanuele, E.
Saab and P. Saab, and proves that positive
Arens-regularity does not give rise to a genuinely new concept as it is equivalent to   classical Arens-regularity.


\begin{proposition} \label{proparensreg}
    For a Banach lattice $E$, the following are equivalent: \\ 
    {\rm (1)} $E^\ast$ has order continuous norm. \\
    {\rm (2)} $E$ does not contain a sublattice isomorphic to
    $\ell_1$. \\
    {\rm (3)} Every positive bilinear map $A\colon E \times E \to \R$ is
    Arens-regular. \\
    {\rm (4)} Every positive operator $T\colon E \to E^*$ is weakly compact. \\
    {\rm (5)} Every positive symmetric bilinear map $A\colon E \times E \to
    \R$ is Arens-regular. \\
    {\rm (6)} $E$ is Arens-regular.
\end{proposition}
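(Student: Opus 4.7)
The plan is to combine three previously established results with one new implication. The equivalence $(1)\Leftrightarrow(2)$ is classical Banach lattice theory \cite{meyer}; $(3)\Leftrightarrow(4)$ follows at once from Ülger's theorem recalled just before the statement, since $A$ is positive precisely when the associated operator $S_A$ given by $S_A(x)(y)=A(x,y)$ is positive, and $A$ is Arens-regular iff $S_A$ is weakly compact; $(1)\Rightarrow(3)$ is the Buskes--Page theorem from \cite{buskes}; and $(3)\Rightarrow(5)$ is trivial. Only $(5)\Rightarrow(1)$ requires new work, and this is the content accounting for the noteworthy contrast with the Banach space case.

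For the novel implication $(5)\Rightarrow(1)$, I would argue by contraposition: assuming $E^*$ fails to have order continuous norm, produce a positive symmetric bilinear form on $E$ that is not Arens-regular. By the characterization of order continuity in $E^*$, there exists an order bounded disjoint positive sequence $(g_n)\subseteq E^*_+$ with $\norma{g_n}\geq\varepsilon>0$, dominated by some $g\in E^*_+$. Disjointness and domination yield $\sum_{n=1}^N g_n=\bigvee_{n=1}^N g_n\leq g$, hence $\sum_n g_n(x)\leq g(x)$ for $x\in E^+$. The candidate form is
\[
A(x,y):=\sum_{n=1}^\infty g_n(x)\,g_n(y),\qquad x,y\in E,
\]
which converges absolutely via $\sum_n|g_n(x)\,g_n(y)|\leq g(|x|)\,g(|y|)$, and is manifestly a bounded positive symmetric bilinear form.

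It remains to verify that $A$ is not Arens-regular, which by Ülger's criterion amounts to showing that $S_A \colon E\to E^*$, $S_A(x)=\sum_n g_n(x)\,g_n$, is not weakly compact. Since $(g_n)$ is disjoint and positive with $\norma{g_n-g_m}=\norma{g_n+g_m}\geq\varepsilon$ for $n\neq m$, the sequence admits no Cauchy subsequence in $E^*$, and Hahn--Banach produces $x^{**}\in E^{**}$ separating $(g_n)$ from $0$ in a quantitative sense. The approach is then to show $S_A^{**}(x^{**})\notin E^*$ (canonically embedded in $E^{***}$) via a bidual comparison, exploiting that the formal series $\sum_n x^{**}(g_n)\,g_n$ fails to converge in norm in $E^*$ while its values on the canonical image $\widehat{E}\subseteq E^{**}$ are prescribed by the partial-sum functional $G:=\sum_n g_n\in E^*$; a second Hahn--Banach extension then produces a test element along which $S_A^{**}(x^{**})$ disagrees with $\widehat G$, giving the Gantmacher obstruction to weak compactness of $S_A$. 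The principal technical difficulty lies in this final bidual analysis: the naive transplantation of the canonical non-Arens-regular form $\sum a_n b_n$ from an $\ell_1$-sublattice is not sufficient on its own, so the construction must exploit the interplay between the $\ell_1$-sublattice of $E$ guaranteed by $(1)\Leftrightarrow(2)$ and the disjoint structure of $(g_n)$ on the dual side, adapting the Buskes--Page asymmetric construction to the symmetric setting.
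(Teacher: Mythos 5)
Your reduction of the problem to the single new implication $(5)\Rightarrow(1)$ (equivalently $(5)\Rightarrow(2)$) is exactly the paper's strategy, and the routine parts are handled the same way. The gap is in the construction you propose for that implication: the form $A(x,y)=\sum_n g_n(x)g_n(y)$ built from an order bounded disjoint positive sequence $(g_n)\subseteq E^*_+$ is in fact \emph{always} Arens-regular, so it can never serve as the counterexample. Indeed, $S_A$ factors as $E\xrightarrow{U}\ell_1\hookrightarrow\ell_2\hookrightarrow c_0\xrightarrow{V}E^*$, where $U(x)=(g_n(x))_n$ lands in $\ell_1$ because $\sum_n|g_n(x)|\le g(|x|)$, and $V(a)=\sum_n a_n g_n$ is bounded on $c_0$ because disjointness and $\sum_{n=N}^M g_n\le g$ give $\bigl\|\sum_{n=N}^M a_ng_n\bigr\|\le \max_{N\le n\le M}|a_n|\,\|g\|$. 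Since $S_A$ factors through the reflexive space $\ell_2$, it is weakly compact, and Ülger's criterion makes $A$ regular. The model case already shows this: for $E=\ell_1$, $g_n=e_n^*$, your form is $\sum_n x_ny_n$, which is the standard \emph{example} of an Arens-regular form on $\ell_1$. Any ``diagonal'' form is doomed for the same reason; non-regularity on $\ell_1$ requires an off-diagonal form that sees the ordering of the indices. Your closing remarks acknowledge that the naive transplantation is insufficient, but the bidual/Hahn--Banach sketch offered in its place cannot succeed either, because the operator it is meant to show non-weakly-compact is weakly compact.

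What actually works (and is what the paper does) is to run $(5)\Rightarrow(2)$ by contradiction on the \emph{domain} side rather than the dual side: if $E$ contains a sublattice isomorphic to $\ell_1$, there is a positive projection $\Pi\colon E\to\ell_1$ (Meyer--Nieberg, Proposition 2.3.11), and one pulls back the Aron--Cole--Gamelin form
\begin{equation*}
B(x,y)=\sum_{j\ \mathrm{even}}\sum_{k=1}^{j-1}x_jy_k+\sum_{k\ \mathrm{even}}\sum_{j=1}^{k-1}x_jy_k,
\end{equation*}
which is symmetric, has nonnegative coefficients (hence is positive), and is not Arens-regular. Setting $A=B\circ(\Pi\times\Pi)$ gives $S_A=\Pi^*\circ S_B\circ\Pi$; since $\{S_B(e_{2n})\}$ is not weakly precompact in $\ell_\infty$ and $\Pi$ is a positive surjection (so the $e_{2n}$ are hit and $\Pi^*$ preserves the obstruction), $S_A$ is not weakly compact. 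I would recommend replacing your construction with this one.
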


\begin{proof}
    The equivalences (1)$\Leftrightarrow$(2)$\Leftrightarrow$(3) follow, respectively, from \cite[Theorem 2.4.14]{meyer} and \cite[Theorem 1]{buskes}. The equivalence (3)$\Leftrightarrow$(4) follows from \cite[Theorem 2.2]{ulger} and the identification 
    $$ A\in \mathcal{L}^+(E, E; \R) \mapsto S_A \in \mathcal{L}^+(E; E^\ast). $$

The implications (3)$\Rightarrow$(5) and (6)$\Rightarrow$(3) are immediate.

    (5)$\Rightarrow$(2) For the sake of contradiction, we assume that $E$ contains a sublattice  isomorphic to $\ell_1$, and so there exists a positive projection $\Pi\colon E \to \ell_1$ (see \cite[Proposition 2.3.11]{meyer}).
  Let $B\colon \ell_1 \times \ell_1 \to \R$ be the symmetric bilinear map which is not Arens-regular presented in \cite[p. 83]{aroncolegamelin}:
    $$ B(x,y) = \sum_{j \text{ even}} \sum_{k=1}^{j-1} x_j y_k \, + \sum_{k \text{ even}} \sum_{j=1}^{k-1} x_j y_k, \quad x = (x_i)_i, y = (y_i)_i \in \ell_1.$$
  \noindent Since $B$ is positive, we may consider the positive symmetric bilinear map $A\colon E \times E \to \R$ given by $A(x,y) = B(\Pi(x), \Pi(y))$
  for all $x,y \in E$. Let us see that $A$ cannot be Arens-regular, showing that its associated operator $S_A\colon E \to E^*$ is not weakly compact:  
  as  $\{S_B(e_{2n}) : n\in \N\}$ is not a weakly precompact
  subset of $\ell_1^* =
  \ell_\infty$ (see \cite[p. 83]{aroncolegamelin}), there exists $\varphi \in \ell_\infty$ such that $(\varphi(S_B(e_{2n})))_n$ does not have any Cauchy subsequence.
  If $x_n \in E$ is such that $\Pi(x_n) = e_{2n}$, then $(\varphi(S_B(\Pi(x_n))))_n$ does not have any Cauchy subsequence. By extending $\varphi\colon
  \ell_1 \to \R$ to $E$ in such a way that it vanishes on the complement, we obtain that
  $\{\Pi^*\circ S_B \circ \Pi(x_n) : n \in \N\}$ is not weakly precompact in $E^*$, and since
  $S_A = \Pi^* \circ S_B \circ \Pi$, we conclude that $S_A$ is
  not weakly compact.

  (1)$\Rightarrow$(6) If $E^*$ has order continuous norm, then $E^*$ is a
  KB-space \cite[Theorem 2.4.14]{meyer}, and hence $E^*$ is weakly
  sequentially complete \cite[Theorem 4.60]{alip}. Thus, $E^*$ has the
  so-called property $(V^*)$ of Pe{\'l}czynski (see
  \cite[Theorem 1.7]{emmanuelle}), and by \cite[Proposition 8]{saab}, we
  obtain that $E$ is Arens-regular.
\end{proof}

It is worth mentioning that Proposition \ref{proparensreg}
shows that the
notions of Arens-regularity, positive Arens-regularity, and positive symmetric
Arens-regularity are all equivalent in the Banach lattice setting. This result
highlights an important
difference between the theory of Banach spaces and the theory of Banach
lattices. In \cite{leung}, D. Leung showed
that if $(e_i)_i$ is the canonical basis for $\ell_p$, $2\le p<\infty$,
then
  $J(e_i)'$, the dual of James space $J(e_i)$, is symmetrically Arens regular but
not Arens regular. Observe  that $J(e_i)$
is not a Banach lattice since it is a non-reflexive space containing no copies of $c_0$ or $\ell_1$ (see \cite[Theorem
    4.71]{alip}).

Our next result is an immediate consequence of Proposition \ref{proparensreg}
and \cite[p. 52]{zalduendo}.

\begin{corollary} \label{lema1}     If $E^*$ has order continuous norm, then the Arens extension $\overline{A}$ of every positive symmetric $n$-linear operator $A\colon E^n \to F$ is also symmetric.
\end{corollary}

The following construction is from \cite{carandolassale, lassalezalduendo}.
Given a bounded linear operator $\varphi\colon X^\ast \to Y^\ast$ and a symmetric
$n$-linear form $A\colon X \times \cdots \times X \to \R$, we define $\Tilde{\varphi}(A) \in \mathcal{L}(^n Y)$ by
$$ \Tilde{\varphi}(A)(y_1, \dots, y_n) = \overline{A}(\varphi^\ast(J_Y(y_1)), \dots, \varphi^\ast(J_Y(y_n))) \quad \text{for all $y_1, \dots, y_n \in F$}. $$
In general, $\Tilde{\varphi}(A)$ is not a symmetric $n$-linear form on $F$. However, for every $P \in \mathcal{P}(^n X)$, we can define $\overline{\varphi}(P) \in \mathcal{P}(^n Y)$ by
$$ \overline{\varphi}(P)(y) = \Tilde{\varphi}(T_P)(y, \dots, y) \quad \text{for every $y \in F$.} $$
In the case $\varphi = J_{X^\ast}$, the morphism $\overline{\varphi}$ obtained is the Arens extension of polynomials from a Banach space $X$
to its bidual $X^{\ast \ast}$. In this case, we will use the notation $\overline{P}$ and $\overline{A}$ for $\overline{J_{X^\ast}}(P)$ and $\overline{J_{X^\ast}}(A)$, respectively. 
Now, for a symmetric $n$-linear map $A\colon X^n \to Z$, we define
$\Tilde{\varphi}(A) \in \mathcal{L}(^n Y; Z^{\ast \ast})$ by
$$ \Tilde{\varphi}(A)(y_1, \dots, y_n)(z^\ast) = \Tilde{\varphi}(z^\ast \circ A)(y_1, \dots, y_n) $$
for all $y_1, \dots, y_n \in Y$ and $z^\ast \in Z^\ast$. Moreover, for every $P \in \mathcal{P}(^n X; Z)$ we may define $\overline{\varphi}(P) \in \mathcal{P}(^n Y; Z^{\ast \ast})$ by 
$$ \overline{\varphi}(P)(y) = \Tilde{\varphi}(T_P)(y, \dots, y) \quad \text{for every $y\in F$}. $$

In the proof of our next results, we will use the fact that the Arens extension of every positive $n$-linear map is also positive \cite[Theorem 2.2(c)]{botgar}. A proof of this result can be found in the doctoral dissertation of L.~Garcia \cite[Theorem 2.1.3]{garcia}.

\begin{theorem} \label{teorema2}
  Suppose that $E^*$ has order continuous norm. If $\varphi\colon
  E^\ast \to F^\ast$ is a lattice isomorphism, then $\overline{\varphi}\colon
  \mathcal{P}^r(^n E; G) \to \mathcal{P}^r(^n F; G^{\ast \ast})$ is a positive linear operator which is
  a linear isomorphism 
  onto its image for every $n \in \N$ and every Dedekind
  complete Banach lattice $G$.
\end{theorem}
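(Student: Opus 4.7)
My plan is to observe that, under the order-continuity of the norm on $E^{\ast}$, the map $\overline{\varphi}$ factors as composition with a fixed lattice embedding of $F$ into $E^{\ast\ast}$, and then to adapt the $w^{\ast}$-density arguments of \cite{carandolassale, lassalezalduendo}. Setting $S := \varphi^{\ast} \circ J_F \colon F \to E^{\ast\ast}$ and using the standard identity $\overline{T_P}(\cdot)(z^{\ast}) = \overline{z^{\ast}\circ T_P}(\cdot)$ for Arens extensions of $n$-linear maps composed with linear functionals, the definition of $\overline{\varphi}$ collapses to
\[
\overline{\varphi}(P)(y) \;=\; \overline{P}\bigl(S(y)\bigr), \qquad y \in F.
\]
The map $S$ is a lattice isomorphism of $F$ onto a closed sublattice of $E^{\ast\ast}$: $\varphi^{\ast}$ is a lattice isomorphism (as the adjoint of one) and $J_F$ is a positive lattice embedding. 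Moreover $S(F)$ is $w^{\ast}$-dense in $E^{\ast\ast}$, since $J_F(F)$ is $w^{\ast}$-dense in $F^{\ast\ast}$ by Goldstine and $\varphi^{\ast}$ is a $w^{\ast}$-$w^{\ast}$ homeomorphism (its inverse $(\varphi^{-1})^{\ast}$ being $w^{\ast}$-continuous too).

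Linearity of $\overline{\varphi}$ is immediate from the definition. For positivity, let $P \geq 0$, so $T_P$ is positive and symmetric. Lemma \ref{lema2} gives $\overline{T_P} \geq 0$, and Lemma \ref{lema1} (whose hypothesis is precisely that $E^{\ast}$ has order continuous norm) gives that $\overline{T_P}$ is symmetric. Therefore the $n$-linear map $\tilde{\varphi}(T_P)(y_1, \dots, y_n) = \overline{T_P}(S(y_1), \dots, S(y_n))$ is both positive (as $S \geq 0$) and symmetric, hence coincides with the symmetric $n$-linear map associated with $\overline{\varphi}(P)$, proving that $\overline{\varphi}(P) \geq 0$. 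Injectivity then follows by polarisation and $w^{\ast}$-density: if $\overline{\varphi}(P) = 0$, then $\overline{T_P}$ vanishes on $S(F)^n$, and applying the separate $w^{\ast}$-continuity of Arens extensions coordinate by coordinate extends the vanishing to $(E^{\ast\ast})^n$, so $T_P = 0$ and $P = 0$.

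For the bounded inverse on the image, I would invoke the regular-norm Aron--Berner isometry $\|\overline{P}\|_r = \|P\|_r$ established in \cite{boydryansnig2}. The upper bound $\|\overline{\varphi}(P)\|_r \leq \|S\|^n \|P\|_r$ is an immediate consequence of $|\overline{\varphi}(P)| \leq \overline{\varphi}(|P|) = \overline{|P|}\circ S$ together with the estimate $\|\overline{|P|}\circ S\| \leq \|S\|^n\|\overline{|P|}\|$. The main obstacle is the lower estimate $\|P\|_r \leq C\|\overline{\varphi}(P)\|_r$, since $\overline{\varphi}$ is only positive and not \emph{a priori} a lattice homomorphism, so the inequality $|\overline{\varphi}(P)| \leq \overline{\varphi}(|P|)$ cannot be reversed directly. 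The intended resolution is to compute $\|\overline{|P|}\|$ using the values of $\overline{|P|}$ on the $w^{\ast}$-dense subset $S(F) \subset E^{\ast\ast}$, combining the positivity of $\overline{|P|}$ (so that its supremum is attained on the positive cone), the fact that $\overline{S(B_F)}^{w^{\ast}} \supseteq \|\varphi^{-1}\|^{-1} B_{E^{\ast\ast}}$, and the $w^{\ast}$-lower semicontinuity of the norm on $G^{\ast\ast}$; transferring the resulting inequality back to $E$ via Aron--Berner yields $\|P\|_r \leq \|S^{-1}\|^n \|\overline{\varphi}(P)\|_r$.
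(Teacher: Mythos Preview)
Your positivity argument is essentially the paper's, and your injectivity via $w^{\ast}$-density is a legitimate alternative to the paper's route (once you note, as you implicitly do, that $\overline{T_P}=\overline{T_{P^+}}-\overline{T_{P^-}}$ is symmetric by Lemma~\ref{lema1}, so separate $w^{\ast}$-continuity of $\overline{T_P}$ in \emph{every} variable follows from the built-in $w^{\ast}$-continuity in the first one).

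The genuine gap is in the lower bound $\|P\|_r\le C\|\overline{\varphi}(P)\|_r$. Two separate problems occur. First, the map $x^{\ast\ast}\mapsto\overline{|P|}(x^{\ast\ast})$ is \emph{not} $w^{\ast}$-to-$w^{\ast}$ continuous even when $\overline{T_{|P|}}$ is separately so, so the lower semicontinuity step ``$\|\overline{|P|}(x^{\ast\ast})\|\le\liminf\|\overline{|P|}(S(y_\alpha))\|$'' does not follow from $S(y_\alpha)\overset{w^{\ast}}{\to}x^{\ast\ast}$. One can rescue a bound via iterated limits in each coordinate of $\overline{T_{|P|}}$, but that yields control of $\|\overline{|P|}\|$ only by the \emph{multilinear} norm of $\overline{T_{|P|}}$ on $S(B_F)^n$, hence by $\|\overline{\varphi}(|P|)\|$ up to the polarisation constant. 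Second, and more seriously, even granting $\|P\|_r\le C\|\overline{\varphi}(|P|)\|$, this does \emph{not} give $\|P\|_r\le C\|\overline{\varphi}(P)\|_r$: you only have $|\overline{\varphi}(P)|\le\overline{\varphi}(|P|)$, so $\|\overline{\varphi}(P)\|_r\le\|\overline{\varphi}(|P|)\|$, which is the wrong direction. You explicitly flag that $\overline{\varphi}$ is not known to be a lattice homomorphism, and this is exactly where that bites.

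The paper sidesteps both issues by producing a bounded left inverse rather than estimating norms directly: it shows that $\overline{\varphi^{-1}}\colon\mathcal{P}^r(^nF;G^{\ast\ast})\to\mathcal{P}^r(^nE;G^{\ast\ast\ast\ast})$ is positive (hence bounded between these Banach lattices) and, using the symmetry of $\overline{T_P}$ from Lemma~\ref{lema1} together with \cite[Lemma~1.1(b)]{carandolassale}, that $\overline{\varphi^{-1}}\circ\overline{\varphi}(P)=P$ first for $P\ge 0$ and then by linearity for all regular $P$. This single identity delivers both injectivity and the lower norm estimate $\|P\|_r\le\|\overline{\varphi^{-1}}\|\,\|\overline{\varphi}(P)\|_r$ without any appeal to $w^{\ast}$-density or lattice-homomorphism properties of $\overline{\varphi}$.
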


\begin{proof}
  We begin by proving that $\overline{\varphi}(P) \geq 0$ for every positive $n$-homogeneous polynomial $P\colon E \to G$. To see this, let $P\colon E
  \to G$ be a positive $n$-homogeneous polynomial. We
    will prove that $\Tilde{\varphi}(T_P)
  (y_1, \dots, y_n) \geq 0$ in $G^{\ast \ast}$ for all $y_1, \dots, y_n \in F^+$.
   Indeed, given $y_1, \dots, y_n \in F^+$ and $0 \leq z^\ast \in G^\ast$, we have that 
    \begin{align*}
        \Tilde{\varphi}(T_P)(y_1, \dots, y_n)(z^\ast) & = \Tilde{\varphi}(z^\ast \circ T_P)(y_1, \dots, y_n) \\
        & = \overline{z^\ast \circ T_P}(\varphi^\ast(J_F(y_1)), \dots, \varphi^\ast(J_F(y_n))) \geq 0
    \end{align*}
    because  $\overline{z^\ast \circ T_P} \geq 0$ (due to $z^\ast \circ T_P \geq 0$), $\varphi^\ast \geq 0$ (due to $\varphi \geq 0$) and $J_F \geq 0$
    (see \cite[Proposition 1.4.5(ii)]{meyer}). This proves that $\Tilde{
      \varphi}(T_P)\colon F^n \to G^{\ast \ast}$ is a positive
    $n$-linear map, and consequently $\Tilde{\varphi}(T_P)^s$ is also a positive $n$-linear map (see \cite[Proposition 1.6]{mujica}). Since $\Tilde{
        \varphi}(T_P)^s$ is the symmetric $n$-linear form associated to
      $\overline{\varphi}(P)$, we conclude that $\overline{\varphi}(P)$ is a
      positive $n$-homogeneous polynomial.
    Therefore, $\overline{\varphi}\colon \mathcal{P}^r(^n E; G) \to
           \mathcal{P}^r(^n F; G^{\ast \ast})$ is positive.
           Since $\varphi^{-1}\colon F^\ast \to E^\ast$ is a positive operator
           (see \cite[Theorem 2.15]{alip}), we also have that 
           $\overline{\varphi^{-1}}\colon \mathcal{P}^r(^n F; G^{**}) \to
           \mathcal{P}^r
      (^n E; G^{****})$  is also a positive operator. 

           Finally, since $E^*$ has order continuous norm, we get by
           Corollary \ref{lema1} that $\overline{T_P}$ is symmetric for every positive $n$-homogeneous polynomial $P\colon E \to G$. Thus, \cite[Lemma 1.1(b)]{carandolassale} yields that
    $\overline{\varphi^{-1}} \circ \overline{\varphi} (P) = P$ holds for every $P \in \mathcal{P}^+(^n E; G)$, and hence 
     $$\overline{\varphi^{-1}} \circ \overline{\varphi} (P) = \overline{\varphi^{-1}} \circ \overline{\varphi} (P^+ - P^-) =  \overline{\varphi^{-1}} \circ \overline{\varphi}(P^+) - \overline{\varphi^{-1}} \circ \overline{\varphi}(P^-) = P^+ - P^- = P$$ 
    for every $P \in \mathcal{P}^r(^n E; G)$, which proves that $\varphi$ is a linear isomorphism onto its image.   
\end{proof}

The first limitation of Theorem \ref{teorema2}
is that $\overline{\varphi}(P)$ may not take its values in
$G$ (see \cite[p. 285]{carandolassale}). The second is that we
cannot ensure that the image of $\overline{\varphi}$ is a Banach lattice. However, when we consider polynomials taking values on a dual Banach lattice, we obtain the following.

\medskip

\begin{theorem} \label{teorema3} 
  If $E^\ast$ and $F^\ast$ are lattice isomorphic with at least one
   (and hence the other) having an order continuous norm,
  then $\mathcal{P}^r(^n E; G^\ast)$ and $\mathcal{P}^r(^n F; G^{\ast})$ are lattice isomorphic for every $n \in \N$ and for every Banach lattice $G$. 
\end{theorem}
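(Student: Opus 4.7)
The plan is to reduce Theorem \ref{teorema3} to Theorem \ref{teorema2} by composing with the canonical retraction $\pi := (J_G)^\ast\colon G^{\ast\ast\ast} \to G^\ast$, which is a contractive lattice homomorphism (since $J_{G^\ast}(G^\ast)$ is a projection band in $G^{\ast\ast\ast}$) satisfying $\pi \circ J_{G^\ast} = \text{id}_{G^\ast}$. Since $G^\ast$ is automatically Dedekind complete, applying Theorem \ref{teorema2} with target $G^\ast$ to both $\varphi$ and $\varphi^{-1}$ (which is a lattice isomorphism by \cite[Theorem 2.15]{alip}; and $F^\ast$ also has order continuous norm, being lattice isomorphic to $E^\ast$) produces positive linear maps
$$\overline{\varphi}\colon \mathcal{P}^r(^n E; G^\ast) \to \mathcal{P}^r(^n F; G^{\ast\ast\ast}), \quad \overline{\varphi^{-1}}\colon \mathcal{P}^r(^n F; G^\ast) \to \mathcal{P}^r(^n E; G^{\ast\ast\ast}),$$
each a linear isomorphism onto its image. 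Post-composing with $\pi$ yields positive linear operators $\Psi(P) := \pi \circ \overline{\varphi}(P)$ and $\Psi'(Q) := \pi \circ \overline{\varphi^{-1}}(Q)$ between $\mathcal{P}^r(^n E; G^\ast)$ and $\mathcal{P}^r(^n F; G^\ast)$.

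The crux is to show $\Psi$ and $\Psi'$ are mutually inverse. For $P \in \mathcal{P}^r(^n E; G^\ast)$ and $g \in G$, introduce the scalar-valued regular polynomial $P_g := J_G(g) \circ P\colon E \to \R$. Unwinding the definitions yields the scalar reduction identity $(\Psi(P))_g = \overline{\varphi}(P_g)$, since for every $y \in F$
$$ \pi(\overline{\varphi}(P)(y))(g) = \overline{\varphi}(P)(y)(J_G(g)) = \Tilde{\varphi}(J_G(g) \circ T_P)(y, \ldots, y) = \overline{\varphi}(P_g)(y),$$
and analogously $(\Psi'(Q))_g = \overline{\varphi^{-1}}(Q_g)$ for $Q \in \mathcal{P}^r(^n F; G^\ast)$. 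Consequently
$$(\Psi' \circ \Psi(P))_g = \overline{\varphi^{-1}}\bigl(\overline{\varphi}(P_g)\bigr) = P_g,$$
where the second equality is \cite[Lemma 1.1(b)]{carandolassale} applied to the scalar regular polynomial $P_g$, whose Arens extension is symmetric by Lemma \ref{lema1} together with linearity (decomposing $P_g = P_g^+ - P_g^-$). Since a $G^\ast$-valued polynomial is determined by its scalarizations as $g$ ranges over $G$, this forces $\Psi' \circ \Psi = \text{id}$; the identity $\Psi \circ \Psi' = \text{id}$ follows by the symmetric argument, using that $F^\ast$ also has order continuous norm.

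Thus $\Psi$ is a linear bijection with $\Psi^{-1} = \Psi'$, and since both $\Psi$ and $\Psi^{-1}$ are positive, $\Psi$ is a lattice isomorphism between $\mathcal{P}^r(^n E; G^\ast)$ and $\mathcal{P}^r(^n F; G^\ast)$. The main obstacle is the scalar reduction identity $(\Psi(P))_g = \overline{\varphi}(P_g)$: although it amounts to tracking the definitions of $\Tilde{\varphi}$ in the scalar and vector-valued cases through the retraction $\pi$, it is the linchpin that allows \cite[Lemma 1.1(b)]{carandolassale} to be invoked at the scalar level, and its use critically depends on Lemma \ref{lema1} to supply the required symmetry of the Arens extension of each $T_{P_g}$.
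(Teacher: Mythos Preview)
Your proof is correct and follows essentially the same route as the paper's: your map $\Psi = \pi \circ \overline{\varphi}$ coincides with the paper's $\overline{\varphi_G}$ (your scalar reduction identity $(\Psi(P))_g = \overline{\varphi}(P_g)$ is precisely the paper's defining formula $\overline{\varphi_G}(P)(y)(z) = \overline{\varphi}(J_G(z)\circ P)(y)$), and both arguments then invoke Lemma~\ref{lema1} together with \cite{carandolassale} for the inverse relation and \cite[Theorem 2.15]{alip} for the lattice-isomorphism conclusion. The only cosmetic difference is that you obtain positivity of $\Psi$ by factoring through Theorem~\ref{teorema2} and the positive retraction $\pi$, whereas the paper verifies positivity of $\overline{\varphi_G}$ directly.
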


\begin{proof} Let $\varphi\colon E^\ast \to F^\ast$ be the lattice isomorphism
  given by the assumption.
  We proceed as in \cite{carandolassale}: define
  $\overline{\varphi_{G}}\colon\mathcal{P}(^n E; G^\ast) \to \mathcal{P}(^n F; G^{\ast})$ by
    $$ \overline{\varphi_{G}}(P)(y)(z) = \overline{\varphi}(J_{G}(z) \circ P)(y) $$
    for all $P \in \mathcal{P}(^n E; G^\ast)$, $y \in F$ and $z \in G$. By the definition of $\overline{\varphi}$, we get that
      $$ \overline{\varphi_{G}}(P)(y)(z) = \overline{P}(\varphi^\ast (J_F(y))) (z) = \overline{T_P}(\varphi^\ast (J_F(y)), \dots, \varphi^\ast (J_F(y)))(z).$$
    In particular, given $P \in \mathcal{P}^+(^n E; G^\ast)$, the $n$-linear map
    $A\colon F^n \to G^\ast$ given by 
    $$ A(y_1, \dots, y_n)(z) = \overline{T_P}(\varphi^\ast (J_F(y_1)), \dots, \varphi^\ast (J_F(y_n)))(z) $$
    is positive, symmetric (because $\overline{T_P}$ is symmetric by the assumption and Corollary \ref{lema1})    
    and satisfies $\overline{\varphi_G}(P)(y) = A(y, \dots, y)$. Thus  $ \overline{\varphi_{G}}(P) \geq 0$ for every positive $n$-homogeneous polynomial
    $P\colon E \to G$. Therefore, we may consider the positive linear operator $\overline{\varphi_{G}}$ from  $\mathcal{P}^r(^n E; G^\ast)$ into $  \mathcal{P}^r
    (^n F; G^{\ast})$. Analogously, since $\varphi^{-1}\colon F^\ast \to
    E^\ast$ is a positive operator (see \cite[Theorem 2.15]{alip}), we get
    that $\overline{\varphi_{G}^{-1}}\colon \mathcal{P}^r(^n F; G^\ast) \to
    \mathcal{P}^r (^n E; G^\ast)$ is also a positive operator.

    Finally, since $E^*$ and $F^*$ have order continuous norm, we get by the same argument used in the proof of \cite[Theorem 1.3]{carandolassale} that $\overline{\varphi_{G}^{-1}} \circ \overline{\varphi_{G}} (P) = P$ and $\overline{\varphi_{G}} \circ \overline{\varphi_{G}^{-1}} (Q) = Q$ hold for all $P \in \mathcal{P}^+(^n E; G^\ast)$ and $Q \in \mathcal{P}^+(^n F; G^\ast)$. Consequently, $$\overline{\varphi_{G}^{-1}} \circ \overline{\varphi_{G}} (P) = \overline{\varphi_{G}^{-1}} \circ \overline{\varphi_{G}} (P^+ - P^-) =  \overline{\varphi_{G}^{-1}} \circ \overline{\varphi_{G}}(P^+) - \overline{\varphi_{G}^{-1}} \circ \overline{\varphi_{G}}(P^-) = P^+ - P^- = P$$ holds for every $P \in \mathcal{P}^r(^n E; G^\ast)$, and analogously $\overline{\varphi_{G}} \circ \overline{\varphi_{G}^{-1}} (Q) = Q$ holds for every $Q \in \mathcal{P}^r(^n F; G^\ast)$. This proves that $(\overline{\varphi}_{G})^{-1} = \overline{\varphi_{G}^{-1}}$.     By \cite[Theorem 2.15]{alip}, we conclude that $\overline{\varphi_{G}} \colon \mathcal{P}^r(^n E; G^\ast) \to \mathcal{P}^r(^n F; G^\ast)$ is a lattice isomorphism.
\end{proof}

An immediate application of Theorem \ref{teorema3} is the following:

\begin{corollary} \label{cortensor}
    If $E^*$ and $F^*$ are lattice isomorphic with at least one having order continuous norm, then $\left ( \widehat{\otimes}_{n, s, |\pi|} E\right ) ^*$ and $\left ( \widehat{\otimes}_{n, s, |\pi|} F\right ) ^*$ are lattice isomorphic for every $n \in \N$.
\end{corollary}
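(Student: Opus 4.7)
The plan is to derive this corollary as a direct consequence of Theorem~\ref{teorema3} applied with the scalar field as the target Banach lattice, combined with the linearization identifications recalled in the Introduction.

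First, I would take $G = \K$ in Theorem~\ref{teorema3}. Since $G^\ast = \K$, the hypotheses of the theorem are met and we obtain, for each $n \in \N$, a lattice isomorphism
\[
\overline{\varphi_\K}\colon \mathcal{P}^r(^n E) \longrightarrow \mathcal{P}^r(^n F).
\]

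Next I would transfer this isomorphism to the positive projective symmetric tensor products via the canonical linearization. As recorded in the Introduction (see \cite[Proposition 3.4]{bubuskes}), the map
\[
\Phi\colon \mathcal{L}^r\!\bigl(\widehat{\otimes}_{n, s, |\pi|} E;\, \K\bigr) \longrightarrow \mathcal{P}^r(^n E),\qquad \Phi(T)=T\circ \theta_n,
\]
is an isometric isomorphism and a lattice homomorphism. Since $\K$ is Dedekind complete and every continuous linear functional on a Banach lattice $X$ is automatically regular (its Jordan decomposition in the Banach lattice $X^\ast$ furnishes a writing as a difference of positive functionals), we have $\mathcal{L}^r(X;\K)=X^\ast$ as Banach lattices. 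Consequently
\[
\mathcal{P}^r(^n E) \cong \bigl(\widehat{\otimes}_{n, s, |\pi|} E\bigr)^\ast
\quad\text{and}\quad
\mathcal{P}^r(^n F) \cong \bigl(\widehat{\otimes}_{n, s, |\pi|} F\bigr)^\ast
\]
as Banach lattices.

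Composing the three lattice isomorphisms above yields the required lattice isomorphism between $\bigl(\widehat{\otimes}_{n, s, |\pi|} E\bigr)^\ast$ and $\bigl(\widehat{\otimes}_{n, s, |\pi|} F\bigr)^\ast$. There is no substantive obstacle here; the only small points to verify are that $\K$ qualifies as a Banach lattice target in Theorem~\ref{teorema3} (trivially, and with $G^\ast=\K$ Dedekind complete as required) and that the identification $\mathcal{L}^r(X;\K) = X^\ast$ really is a lattice isomorphism, so that the topological dual of the Fremlin symmetric tensor product is recovered as the space of scalar-valued regular $n$-homogeneous polynomials.
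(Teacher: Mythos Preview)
Your proof is correct and is exactly what the paper intends: the corollary is stated without proof as ``an immediate application of Theorem~\ref{teorema3}'', and applying that theorem with $G=\K$ together with the Bu--Buskes linearization $\Phi$ recalled in the Introduction (giving $\mathcal{P}^r(^nE)\cong(\widehat{\otimes}_{n,s,|\pi|}E)^\ast$ as Banach lattices) is precisely the intended argument.
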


Let us give an example:

\begin{example} \label{exteo}
  In \cite{lacey}, E. Lacey constructed a nonatomic AM-space $X$ that is linearly isomorphic to $C \langle 1, \alpha \rangle$, where $\alpha \geq \omega$ is
  a countable ordinal and $\langle 1, \alpha \rangle$ denotes the order interval of ordinals between $1$ and $\alpha$ with the usual order topology. Moreover, there exists an isometric isomorphism $R \colon X^* \to \ell_1$ that is also order preserving, which means that $R\varphi
    \geq
    0$ in $\ell_1$ if and only if $\varphi \geq 0$ in $X^*$. This
    means that $R$ preserves the 
  lattice structure, and hence $R$ is a lattice isomorphism (see \cite[p. 5]{laceybook} or \cite[Example 3.1]{gao}), proving that $X^*$ and $\ell_1$ are lattice isomorphic. Let us check that $c_0^\ast$ and $c^\ast$ are also lattice isomorphic to $\ell_1$: \\
{\rm (i)} Let $S\colon \ell_1 \to c_0^*$ be the linear isomorphism given by $S(b)(x) = \displaystyle \sum_{j=1}^\infty b_j x_j$ for all $b = (b_j)_j \in \ell_1$ and $x = (x_j)_j \in c_0$. It is easy to see that both $S$ and $S^{-1}$ are positive operators, and so $S$ is a lattice isomorphism by
\cite[Theorem 2.15]{alip}. \\
{\rm (ii)} Let $T\colon \ell_1 \to c^*$ be the linear isomorphism given by $T(b)(x) = b_1 \displaystyle \lim_{n \to \infty} x_n + \sum_{j=2}^\infty b_j x_{j-1}$ for all $b = (b_j)_j \in \ell_1$ and $x = (x_j)_j \in c$. Clearly, $T \geq 0$. The inverse map $T^{-1}\colon c^* \to \ell_1$ is  given by 
$T^{-1}(\varphi) = (\varphi(e) - \displaystyle \sum_{j=1}^\infty \varphi(e_j), \varphi(e_1), \varphi(e_2), \dots)$, where $e = (1,1,1, \dots)$.
Indeed, for every $x = (x_j)_j \in c$, we have that $x = (\displaystyle \lim_{n \to \infty} x_n)  \, e \, + \, \sum_{j=1}^\infty (x_{j} - \lim_{n \to \infty} x_n)e_{j},  $ and hence 
\begin{align*}
    T(\varphi(e) - \sum_{j=1}^\infty \varphi(e_j), \varphi(e_1), \varphi(e_2), \dots)(x) & = (\varphi(e) - \sum_{j=1}^\infty \varphi(e_j))\displaystyle \lim_{n \to \infty} x_n  +  \sum_{j=1}^\infty \varphi(e_j)x_j \\
    & = \varphi(e) \displaystyle \lim_{n \to \infty} x_n +  \sum_{j=1}^\infty (x_j -  \lim_{n \to \infty} x_n) \varphi(e_j) \\
    & =  \varphi \left ( \lim_{n \to \infty} x_n \, e + \sum_{j=1}^\infty (x_j -  \lim_{n \to \infty} x_n) e_j  \right )\\
    & = \varphi(x).
\end{align*}
To see that $T^{-1} \geq 0$, it is enough to observe that $\varphi(e) -  \displaystyle \sum_{j=1}^\infty \varphi(e_j) \geq 0$ holds for all $\varphi \geq 0$, because the positive cone is norm-closed and $\varphi(e) - \displaystyle \sum_{j=1}^n \varphi(e_j) \geq 0$ for every $n \in \N$. Finally, since both $T$ and $T^{-1}$ are positive operators, we get from \cite[Theorem 2.15]{alip} that $T$ is lattice isomorphism. 

Therefore, it follows that $c_0^*$, $c^*$ and $X^*$ are all lattice isomorphic, while $c_0$, $c$ and $X$ are not lattice isomorphic ($c$ is not Dedekind complete, $c_0$ has an order continuous
norm and $X$ is nonatomic, while both $c$ and $c_0$ are atomic). By Theorem \ref{teorema3}, $\mathcal{P}^r(^n X; G^\ast)$, $\mathcal{P}^r(^n c_0; G^\ast)$ and $\mathcal{P}^r(^n c; G^\ast)$ are all lattice isomorphic for every $n \in \N$
and for every Banach lattice $G$.
\end{example}

An $n$-homogeneous polynomial $P\colon E\to F$ is said to be compact (resp.
weakly-compact) if $\overline{P(B_E)}$ is a compact (resp. weakly compact)
subset of $F$.
If $\mathcal{P}_{\cal K}^+(^n E; F)$ denotes the set of all positive compact $n$-homogeneous polynomials from $E$ into $F$, we define $\mathcal{P}_{\mathcal{K}}^r(^n E; F) = \text{span } \mathcal{P}_{\mathcal{K}}^+(^n E; F)$. Analogously, we define $\mathcal{P}_{\mathcal{W}}^r(^n E; F) = \text{span } \mathcal{P}_{\mathcal{W}}^+(^n E; F)$, where 
 $\mathcal{P}_{\cal W}^+(^n E; F)$ denotes the set of all positive  weakly compact $n$-homogeneous polynomials from $E$ into $F$. It was proved in \cite{botmirru} that under certain conditions $\mathcal{P}_{\mathcal{K}}^r(^n E; F)$ and $\mathcal{P}_{\mathcal{W}}^r(^n E; F)$ are Banach lattices with the regular norm.


\begin{proposition} \label{cor1}
  Let $E, F, G$ be Banach lattices. Suppose that there exists a lattice isomorphism $\varphi\colon E^\ast \to F^\ast$, $E^*$ has order continuous norm
  and $G^\ast$ is atomic with order continuous norm or it is an AL-space. 
    Then, $\mathcal{P}_{\mathcal{K}}^r(^n E; G^\ast)$ and $\mathcal{P}_{\mathcal{K}}^r(^n F; G^\ast)$ are lattice isomorphic.
\end{proposition}

\begin{proof}
  We begin recalling from \cite[Example 2.3]{botmirru} that, under the assumption on $G^\ast$, $\mathcal{P}_{\mathcal{K}}^r(^n E; G^\ast)$ and $\mathcal{P}_{\mathcal{K}}^r(^n F; G^\ast)$ are closed sublattices of $\mathcal{P}^r(^n E;
  G^\ast)$ and $\mathcal{P}^r(^n F; G^\ast)$, respectively. Letting $\overline{\varphi_{G}}\colon  \mathcal{P}^r(^n E; G^\ast) \to \mathcal{P}^r(^n F; G^{\ast})$ be the lattice isomorphism defined in the proof of Theorem \ref{teorema3}, we claim that $\overline{\varphi_G}(P)$ is a compact polynomial for every $P \in \mathcal{P}_{\mathcal{K}}^+(^n E; G^\ast)$:

  To prove that $\overline{\varphi_G}(P)$ is compact, let $(y_k)_k \subset B_F$. In particular, $(\varphi^\ast (J_F(y_k)))_k$ is a bounded sequence in $E^{\ast \ast}$. Since $P\colon E \to G^*$
  is a compact $n$-homogeneous polynomial, it follows from \cite[Proposition 2.1]{aronberner} that $\overline{P}\colon E^{**} \to G^*$ is also a compact
  polynomial, and hence
    $(\overline{P}(\varphi^\ast (J_F(y_k))))_k$ contains a convergent subsequence in $G^{\ast}$, that is $\displaystyle \lim_{j\to \infty} \overline{P}(\varphi^\ast (J_F(y_{k_j}))) = g^{\ast}\in G^*$.
    Finally, since $\overline{\varphi_G}(P)(y_k) = \overline{P}(\varphi^\ast (J_F(y_k)))|_{G}$ for every $k \in \N$, we conclude that 
    $$ \displaystyle \lim_{j \to \infty} \overline{\varphi_G}(P)(y_{k_j}) =   \displaystyle \lim_{j \to \infty} \overline{P}(\varphi^\ast (J_F(y_{k_j})))|_{G} = g^{\ast} \in G^\ast, $$
    proving that $\overline{\varphi_G}(P)$ is compact. In particular, we obtain that 
   $\overline{\varphi_G}\colon
      \mathcal{P}_{\mathcal{K}}^r(^n E; G^\ast) \to
      \mathcal{P}_{\mathcal{K}}^r(^n F; G^\ast)$
    is a well defined lattice homomorphism. To conclude that
    $\overline{\varphi_G}$
      is an isomorphism, just note that the same argument used above shows that
    $\overline{\varphi_G^{-1}}(Q)$ is a compact polynomial for every $Q \in \mathcal{P}_{\mathcal{K}}^+(^n F; G^\ast)$.
\end{proof}

The following holds by applying the same argument used above replacing \cite[Example 2.3]{botmirru} with \cite[Example 2.4]{botmirru}.

\begin{proposition} \label{cor2}
  Let $E, F, G$ be Banach lattices. Suppose that there exists a lattice isomorphism $\varphi\colon E^\ast \to F^\ast$ and that both $E^*$ and $G^\ast$ have
  order continuous norms.
Then, $\mathcal{P}_{\mathcal{W}}^r(^n E; G^\ast)$ and $\mathcal{P}_{\mathcal{W}}^r(^n F; G^\ast)$ are lattice isomorphic.
\end{proposition}

 Given Banach lattices $E$, $F$, and $n\in \mathbb{N}$,  we
  say that an $n$-homogeneous polynomial $P\in \mathcal{P}^r(^n E; F)$  is orthogonally additive
  if
$P(x+y)=P(x)+P(y)$ whenever $|x|\wedge|y|=0$; or equivalently if $T_P$ is orthosymmetric, that is $T_P(x_1,\dots,x_m)=0$ whenever
some pair $x_i$, $x_j$  of the arguments are disjoint, i.e. $|x_i|\wedge |x_j| = 0$ (see \cite[Lemma 4.1]{bubuskes}).
In the case where $F$ is Dedekind complete, the Banach lattice of all regular polynomials $P\colon E \to F$ for which $|P|$ is orthogonally additive is
denoted by $\mathcal{P}_o^r (^n E; F)$ (see \cite[Section 5]{bubuskes}).

\begin{theorem} If $E^\ast$ and $F^\ast$ are lattice isomorphic with at least
  one (and hence the other one) having an order continuous norm, then
  $\mathcal{P}^r_o(^n E; G^*)$ and $\mathcal{P}^r_o(^n F; G^*)$
  are lattice isomorphic for every $n\in\mathbb{N}$ and every Banach lattice $G$.
\end{theorem}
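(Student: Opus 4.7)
The plan is to apply Theorem \ref{teorema3} to obtain the lattice isomorphism $\overline{\varphi_G}\colon \mathcal{P}^r(^n E; G^*) \to \mathcal{P}^r(^n F; G^*)$, and then show that it restricts to a lattice isomorphism between the orthogonally additive subspaces. Since $\overline{\varphi_G}$ is a lattice homomorphism, $|\overline{\varphi_G}(P)| = \overline{\varphi_G}(|P|)$ for every $P \in \mathcal{P}^r(^n E; G^*)$, so by the very definition of $\mathcal{P}^r_o$ it suffices to prove that $\overline{\varphi_G}$ sends $\mathcal{P}_o^+(^n E; G^*)$ into $\mathcal{P}_o^+(^n F; G^*)$; applying the same reasoning to the lattice isomorphism $\varphi^{-1}\colon F^\ast \to E^\ast$ will then handle the reverse direction and yield a bijection.

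Let $\varphi\colon E^* \to F^*$ denote the given lattice isomorphism, fix a positive orthogonally additive $P\in \mathcal{P}_o^+(^n E; G^*)$, and recall from \cite[Lemma 4.1]{bubuskes} that this is equivalent to the associated symmetric $n$-linear operator $T_P\colon E^n \to G^*$ being positive and orthosymmetric. From the computation in the proof of Theorem \ref{teorema3}, the symmetric $n$-linear operator associated with $\overline{\varphi_G}(P)$ is
\[
A(y_1,\dots,y_n)(z) = \overline{T_P}\bigl(\varphi^*(J_F(y_1)),\dots,\varphi^*(J_F(y_n))\bigr)(z), \quad y_i \in F,\ z\in G,
\]
so, by \cite[Lemma 4.1]{bubuskes} once more, it is enough to verify that $A$ is orthosymmetric. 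Two ingredients combine to give this. First, $\varphi^\ast\colon F^{\ast\ast} \to E^{\ast\ast}$ is a lattice isomorphism (as the adjoint of a lattice isomorphism with positive inverse, see \cite[Theorem 2.15]{alip}) and $J_F\colon F \to F^{\ast\ast}$ is a lattice homomorphism (see \cite[Proposition 1.4.5]{meyer}), so $\varphi^\ast\circ J_F$ is a lattice homomorphism and preserves disjointness. Second, under the order-continuity hypothesis on $E^*$, the Arens extension $\overline{T_P}\colon (E^{**})^n \to G^{***}$ remains orthosymmetric: scalarizing against any $0\le z^{***}\in G^{***}$ reduces the claim to the case of scalar-valued, positive, symmetric, orthosymmetric forms, and in that situation symmetry of the extension (Lemma \ref{lema1}) combined with the scalar bidual theory of orthogonally additive polynomials from \cite{boydryansnig2} yields orthosymmetry of the Arens extension. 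Consequently, whenever $|y_i|\wedge|y_j|=0$ in $F$ for some $i\ne j$, we have $|\varphi^\ast(J_F(y_i))|\wedge|\varphi^\ast(J_F(y_j))|=0$ in $E^{\ast\ast}$, forcing $A(y_1,\dots,y_n)=0$.

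The main obstacle I foresee is the orthosymmetry of $\overline{T_P}$ on $E^{\ast\ast}$; this is precisely the place where the order-continuity of $E^*$ is indispensable, playing a role analogous to the one it plays in Lemma \ref{lema1}, since without it the Arens extension would fail to even be symmetric and orthogonal additivity could fail to persist in the bidual. Once the inclusion $\overline{\varphi_G}\bigl(\mathcal{P}_o^+(^n E;G^*)\bigr)\subseteq \mathcal{P}_o^+(^n F;G^*)$ is established, running the identical argument for $\varphi^{-1}$ gives $\overline{\varphi_G^{-1}}\bigl(\mathcal{P}_o^+(^n F;G^*)\bigr)\subseteq \mathcal{P}_o^+(^n E;G^*)$, and the restriction of $\overline{\varphi_G}$ to $\mathcal{P}^r_o(^n E;G^*)$ is then automatically a lattice isomorphism onto $\mathcal{P}^r_o(^n F;G^*)$, inheriting all relevant properties from Theorem \ref{teorema3}.
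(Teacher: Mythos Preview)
Your proposal is correct and follows essentially the same strategy as the paper: restrict the lattice isomorphism $\overline{\varphi_G}$ of Theorem~\ref{teorema3} by showing that $\varphi^\ast\circ J_F$ preserves disjointness and that the Arens extension of the orthosymmetric map $T_P$ is again orthosymmetric. The only differences are in the justifications of these two steps: you obtain that $\varphi^\ast$ is a lattice isomorphism directly from \cite[Theorem~2.15]{alip} (arguably cleaner than the paper's route through interval preservation and \cite{vanamstel}), while for the orthosymmetry of $\overline{T_P}$ the paper invokes \cite[Corollary~5.18]{roberts}, which applies to arbitrary order bounded orthosymmetric maps without the order-continuity hypothesis, in place of your scalarization argument appealing to \cite{boydryansnig2}.
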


\begin{proof}
  Let $\varphi\colon E^* \to F^*$ be the lattice isomorphism given by the
  assumption. By \cite[Theorem 2.20]{alip} we get that $\varphi^*$ is interval preserving. Since $\varphi$ is surjective, $\varphi^*$ is also injective.
  Thus, $\varphi^*$ is a lattice homomorphism by
    \cite[Proposition 2.1]{vanamstel}.
  Letting $\overline{\varphi_{G}}\colon \mathcal{P}^r(^n E; G^\ast) \to
  \mathcal{P}^r(^n F; G^{\ast})$ be the lattice isomorphism defined in the proof of Theorem \ref{teorema3}, we claim that $\overline{\varphi_G}(P)$ is an orthogonally additive polynomial for every $P \in \mathcal{P}_{o}^+(^n E; G^\ast)$. Indeed, given $y_1, \dots, y_n \in F$ with $|y_i| \wedge |y_j| = 0$ for some $i \neq j$, we have that
  $|\varphi^*(J_F(y_i))| \wedge |\varphi^* (J_F(y_j)) |= 0$, because both
  $\varphi^*$ and $J_F$ preserve disjointness. Since the
  Arens-extension of an order bounded orthosymmetric $n$-linear map is also orthosymmetric (see \cite[Corollary 5.18]{roberts}), we conclude that
    $$ T_{\overline{\varphi_G}(P)}(y_1, \dots, y_n) = \overline{T_P}(\varphi^*(J_F(y_1)), \dots, \varphi^*(J_F(y_n))) = 0, $$
    proving that $T_{\overline{\varphi_G}(P)}$ is orthosymmetric. Thus, $\overline{\varphi_G}(P) \in \mathcal{P}_{o}^+(^n F; G^*)$ as we claimed. Analogously, we can prove that $\overline{\varphi_G^{-1}}(Q) \in \mathcal{P}_{o}^+
      (^n E; G^*)$
    for each $Q \in
      \mathcal{P}_{o}^+(^n F; G^*)$, and therefore
   $\overline{\varphi_G}\colon \mathcal{P}_{o}^r (^n E; G^*) \to \mathcal{P}_{o}^r (^n F; G^*)$
    is a lattice isomorphism.
\end{proof}

\section{Regular nuclear polynomials}

Let $E$ and $F$ be two Banach lattices with $F$ being Dedekind complete.
Following \cite[p. 725]{blancolal}, a bounded linear operator $T\colon E
\to F$
is said to be a \textit{regular nuclear operator} if it belongs to the image of the map 
$$\overline{\Phi}\colon E^\ast \,\widehat{\otimes}_{|\pi|} F \to \mathcal{L}^r(E; F), \quad \overline{\Phi}(x^* \otimes y)(x) = x^*(x) y.$$

To present a polynomial version of such class, fix $n \in \N$ and let $\phi_1, \dots, \phi_n \in (E^*)^+$ be given.
  If $A: E \times \overset{n}{\cdots} \times E \to \R$ is the $n$-linear operator defined by 
$$ A(x_1, \dots, x_n) = \phi_1(x_1) \cdots \phi_n(x_n), $$
its symmetrization is given by
$$ A^s(x_1, \dots, x_n) = \frac{1}{n!} \sum_{\sigma \in S_n} A(x_{\sigma(1)}, \dots, x_{\sigma(n)}) = \frac{1}{n!} \sum_{\sigma \in S_n} \phi_1(x_{\sigma(1)}) \cdots \phi_n(x_{\sigma(n)}). $$
Since $\phi_1, \dots, \phi_n \geq 0$, we get that $A^s \geq 0$. Thus $P_{\phi_1, \dots, \phi_n}(x):= A(x, \dots, x) = \phi_1(x) \cdots \phi_n(x)$ defines a positive $n$-homogeneous polynomial on $E$, and so we can define a positive symmetric $n$-linear map
$$ (\phi_1, \dots, \phi_n) \in E^* \times \cdots \times E^* \mapsto P_{\phi_1, \dots, \phi_n} \in \mathcal{P}^r(^nE).$$
Its range is contained in $\mathcal{P}^r(^nE)$. For
instance, taking $n = 2$, 
$$ \phi_1 \cdot \phi_2 = (\phi_1^+ - \phi_1^-)(\phi_2^+ - \phi_2^-) = (\phi_1^+\phi_2^+ + \phi_1^- \phi_2^-) - (\phi_1^+\phi_2^- + \phi_1^- \phi_2^+). $$
Now, we can consider the positive bilinear map 
$B \colon \widehat{\otimes}_{n, s, |\pi|}
E^\ast \times F  \to \mathcal{P}^r(^nE ; F)$ given by $B(\theta_n(x^*) \otimes y)(x) = T(\theta_n(x^*)) y = (x^*(x))^n y$ for all $0 \leq x^* \in E^*$, $y \in F^+$, $x\in E^+$. Hence, $B$ induces a
positive linear operator $\overline{\Phi}_n\colon \widehat{\otimes}_{n, s, |
  \pi|} E^\ast \,\widehat{\otimes}_{|\pi|} F \to \mathcal{P}^r(^nE; F)$ such
that
$$ \overline{\Phi}_n(\theta_n(x^*) \otimes y)(x) = (x^*(x))^n y \quad
\text{for all $0 \leq x^* \in E^*$, $y \in F^+$ and $x \in E^+$.} $$
The elements of $\text{Im }  \overline{\Phi}_n := \mathcal{P}_{\mathcal{N}}^r(^n E; F)$  are called
\textit{regular nuclear $n$-homogeneous polynomials}.
As in \cite{blancolal}, $\mathcal{P}_{\mathcal{N}}^r(^n E; F)$ is a Banach lattice endowed with the norm $\norma{\cdot}_{\mathcal{N}, r}$ induced by the quotient norm on 
$\widehat{\otimes}_{n, s, |\pi|} E^\ast \,\widehat{\otimes}_{|\pi|} F / \ker \overline{\Phi}$. 
The main objective of this section is to prove that
if $G$ is a Dedekind
complete Banach lattice with the lattice approximation property,
and both
$E^*$ and $F^*$ have the lattice approximation property and are
    lattice isomorphic, then $\mathcal{P}_{\mathcal{N}}^r(^n E; G)$ and $\mathcal{P}_{\mathcal{N}}^r(^n F; G)$ are lattice isomorphic. To do this, we recall from \cite{blancolal} that a \textit{principal pair} of
a Banach lattice
$E$ is an ordered pair $(G, g)$, in which, $G$ is a finite dimensional vector sublattice of $E$ and $g$ is a strong order unit of $G$, i.e. for each $y \in G$, exists $\lambda > 0$ with $|y| \leq \lambda g$.
A Banach lattice $E$ is said to
  have the \textit{lattice approximation property} (LAP in
  short)  if for every $\varepsilon > 0$ and every sequence $(E_n, e_n)$ of
  principal pairs of $E$ such that $\|e_n\|\to 0$ there
  exists finite rank operator $T\colon E \to E$ such
  that $\norma{|(T - id)_{E_n}|(e_n)} < \varepsilon$ for every $n \in \N$.

  The following lemma, which can be seen as a generalization
  of \cite[Lemma 3.5]{blancolal}, will be necessary to achieve our objective.

\begin{lemma} \label{lema31}
    Let $E$ and $F$ be Dedekind complete Banach lattices and $w \in \widehat{\otimes}_{n, s, |\pi|} E \, \widehat{\otimes}_{|\pi|} \, F$. For each $0 < \varepsilon \leq 1$, there exist finite-dimensional vector sublattices $E_0 \subset E$ and $F_0 \subset F$ and a vector $\widetilde{w} \in \otimes_{n,s} E_0 \, \otimes F_0$ such that $\norma{w - \tilde{w}}_{s, |\pi|} \leq \varepsilon$ and $\norma{\tilde{w}}_{\otimes_{n,s}E_0 \widehat{\otimes}_{|\pi|} F_0} \leq (1+\varepsilon)^n \norma{\tilde{w}}_{s, |\pi|} + 2^n \varepsilon$
\end{lemma}

\begin{proof}
    Since $\widehat{\otimes}_{n, s, |\pi|} E \otimes \, F$ is dense in $\widehat{\otimes}_{n, s, |\pi|} E \, \widehat{\otimes}_{|\pi|} \, F$ there exists $w_0 = \displaystyle\sum_{j=1}^m u_j \otimes y_j \in \widehat{\otimes}_{n, s, |\pi|} E \otimes \, F$ such that
    $ \norma{w - w_0}_{|\pi|} \leq \dfrac{\varepsilon}{4}$.
    Without loss of generality, we assume that $u_j \neq 0$ and $y_j \neq 0$ for every $j =1, \dots, n$.
    Now, from the density of ${\otimes}_{n, s} E$ in $\widehat{\otimes}_{n, s, |\pi|} E$, for each $j=1, \dots, m$, there exists
      $u_{j,0} := \displaystyle \sum_{i=1}^{m_j} \lambda_{i,j} \theta_n(x_i^j)$
      such that $\norma{u_j - u_{j,0}}_{s, |\pi|} \leq \dfrac{\varepsilon}{4m \max \{ \norma{y_1}, \dots, \norma{y_m} \}}$. Thus
    $u: = \sum_{j=1}^m \sum_{i=1}^{m_j} \lambda_{i,j} \theta_n(x_i^j) \otimes y_j \in
    {\otimes}_{n, s} E \otimes F$ satisfies
    \begin{align*}
        \norma{w - u}_{|\pi|} & \leq \norma{w - w_0}_{|\pi|} + \norma{w_0 - u}_{|\pi|} \\
        & 
        \leq \frac{\varepsilon}{4} + \norma{\sum_{j=1}^m (u_j \otimes y_j -
          \sum_{i=1}^{m_j} \lambda_{i,j}\theta_n(x_i^j) \otimes y_j)}_{|\pi|} \\
        & = \frac{\varepsilon}{4} +  \norma{\sum_{j=1}^m (u_j - \sum_{i=1}^{m_j} \
          \lambda_{i,j} \theta_n(x_i^j)) \otimes y_j}_{|\pi|} \\
        & \leq \frac{\varepsilon}{4} + \sum_{j=1}^m \norma{u_j - \sum_{i=1}^{m_j}
          \lambda_{i,j} \theta_n(x_i^j)}_{s, |\pi|} \norma{y_j} \leq \frac{\varepsilon}{2}.
    \end{align*}
Also,  by the definition of the positive projective tensor norm, there exist $z_1, \dots, z_p \in (\widehat{\otimes}_{n, s, |\pi|} E)^+$ and $v_1, \dots, v_p \in F^+$ such that
    $$ |u| \leq \sum_{i=1}^p z_i \otimes v_i \, \text{ and } \,
    \sum_{i=1}^p \norma{z_i}_{s, |\pi|} \norma{v_i} \leq \norma{u}_{|\pi|} + \frac{\varepsilon}{4}. $$
    Without loss of generality, we assume that $v_i \neq 0$ for every $i = 1, \dots, p$.
    For each $i=1, \dots, p$, we apply the definition of the positive projective symmetric tensor norm to obtain $z_i^1, \dots, z_i^{p_i} \in E^+$ such that
    $$ z_i = |z_i| \leq \sum_{k=1}^{p_i} \theta_n(z_i^k) \, \text{ and } \, \sum_{k=1}^{p_i} \norma{z_i^k}^n \leq \norma{z_i}_{s, |\pi|} + \frac{\varepsilon}{4p \max \{ \norma{v_1}, \dots, \norma{v_p} \}}. $$
    Therefore, 
    $$ |u| \leq \sum_{i=1}^p z_i \otimes v_i \leq \sum_{i=1}^p (\sum_{k=1}^{p_i} \theta_n(z_i^k)) \otimes v_i = \sum_{i=1}^p \sum_{k=1}^{p_i} \theta_n(z_i^k) \otimes v_i $$
    and
    \begin{align*}
        \sum_{i=1}^p \sum_{k=1}^{p_i} \norma{z_i^k}^n \norma{v_i} & = \sum_{i=1}^p \left( \sum_{k=1}^{p_i} \norma{z_i^k}^n \right )\norma{v_i} \leq \sum_{i=1}^p \left (\norma{z_i}_{s, |\pi|} + \frac{\varepsilon}{4p \max \{ \norma{v_1}, \dots, \norma{v_p} \}} \right ) \norma{v_i} \\
        & = \sum_{k=1}^p \norma{z_i}_{s, |\pi|}\norma{v_i} + \frac{\varepsilon}{4}  \leq \norma{u}_{|\pi|} + \frac{\varepsilon}{4} + \frac{\varepsilon}{4} = \norma{u}_{|\pi|} + \frac{\varepsilon}{2}.
    \end{align*}    
    Let 
    $$E_0 := {\rm span}\{ x_1^{j}, \dots, x_{m_1}^{j}, \, z_i^1, \dots, z_i^{p_i} \, : j =1, \dots, m, \, i = 1, \dots, p  \} \subset E$$ 
    and 
    $$F_0 :=  {\rm span} \{ y_1, \dots, y_m, \, v_1, \dots, v_p \} \subset F.$$
    Since $E$ and $F$ are Dedekind complete, by \cite[Proposition 2.1]{blancolal}, for each $\delta > 0$, there are finite-dimensional vector sublattices $E_\delta \subset E$, $F_\delta \subset F$, and linear maps 
    $$ U_\delta: E_0 \to E_\delta, \, V_\delta: F_0 \to F_\delta $$  
    such that 
    $$ \norma{U_\delta(x) - x} \leq \delta \norma{x} \, \text{for every
      $x \in E_0$} \, \text{ and } \,
    {\norma{V_\delta(y) - y}\le \delta\norma{y}}
    \, \text{for every $y \in F_0$}. $$
    For each $\delta> 0 $, set $v^\delta = \sum_{i=1}^p \sum_{k=1}^{p_i} \theta_n(U_{\delta}(z_i^k)) \otimes V_{\delta}(v_i)$. We claim that $v^\delta \longrightarrow v := \sum_{i=1}^p \sum_{k=1}^{p_i} \theta_n(z_i^k) \otimes v_i$ as $\delta \longrightarrow 0$. We have
    \begin{align*}
        \norma{v - v^\delta}_{|\pi|} & \leq \sum_{i=1}^p \sum_{k=1}^{p_i} \norma{\theta_n(U_{\delta}(z_i^k)) \otimes V_{\delta}(v_i) - \theta_n(z_i^k) \otimes v_i}_{|\pi|} \\
        & \leq \sum_{i=1}^p \sum_{k=1}^{p_i}[ \norma{\theta_n(U_\delta(z_i^k))}_{s, |\pi|} \norma{V_\delta(v_i) - v_i} +  \norma{\theta_n(U_\delta(z_i^k)) - \theta_n(z_i^k)}_{s, |\pi|} \norma{v_i}] \\
        & \leq \sum_{i=1}^p \sum_{k=1}^{p_i}[ \norma{U_\delta(z_i^k)}^n \norma{V_\delta(v_i) - v_i} +  n \, \max \{ \| U_\delta(z_i^k) \|, \norma{z_i^k} \}^{n-1} \norma{U_\delta(z_i^k) - z_i^k} \norma{v_i}] \\
        & \leq \sum_{i=1}^p \sum_{k=1}^{p_i} [M^{n} \delta \norma{v_i} + n  M^{n} \delta \norma{z_i^k}] \\
        & \leq \delta  \sum_{i=1}^p \sum_{k=1}^{p_i} [M^{n+1} + n M^{n+1}],
    \end{align*}
    provided $\delta\le 1$,
    where $M = \displaystyle \max_{i=1, \dots, p} \{ \norma{U_\delta(z_i^1)}, \dots, \norma{U_\delta(z_i^{p_i})}, \norma{z_i^1}, \dots, \norma{z_i^{p_i}}, \norma{v_i}  \}$. Letting $\delta \to 0$, we obtain that $\norma{v -
      v^\delta}_{|\pi|} \longrightarrow 0$. Analogously, we have that 
    $$ u^\delta := \sum_{j=1}^m \sum_{i=1}^{m_j} \lambda_{i,j}\theta_n(U_\delta(x_i^j)) \otimes V_\delta(y_j) \longrightarrow u. $$
     The continuity of the lattice operations yield that 
       $$ w^\delta := v^\delta \wedge ((-v^\delta) \vee u^\delta) \to 
v \wedge ((-v) \vee u) = u \quad \text{as } \delta \to 0. $$
Now, we choose $0 < \delta_0 < \min\{1, \varepsilon\}$ such that
$ \|u - w^{\delta_0} \|_{|\pi|} \le \frac{\varepsilon}{2}$.
We will prove that $E_{\delta_0}$, $F_{\delta_0}$ and
$\widetilde{w} := w^{\delta_0}$
satisfy this lemma hypothesis. First, we note that since
$E_{\delta_0}$ and $F_{\delta_0}$ are finite-dimensional vector lattices, $\otimes_{n,s} E_{\delta_0} \otimes F_{\delta_0}$ is finite-dimensional vector lattice,
and so
$\tilde{w} \in \otimes_{n,s} E_{\delta_0} \otimes F_{\delta_0}$.
Secondly, we have that
$$ \|w - \tilde{w}\|_{|\pi|} 
\le \|w - u\|_{|\pi|} + \|u - \tilde{w}\|_{|\pi|} 
\le \varepsilon.$$
Moreover, since
$$ |\widetilde{w}| \leq |v^{\delta_0}| \leq \sum_{i=1}^p \sum_{k=1}^{p_i} |\theta_n(U_{\delta_0}(z_i^k))| \otimes |V_{\delta_0}(v_i)| =  \sum_{i=1}^p \sum_{k=1}^{p_i} \theta_n(|U_{\delta_0}(z_i^k)|) \otimes |V_{\delta_0}(v_i)|, $$
we get
\begin{align*}
    \norma{\widetilde{w}}_{\otimes_{n,s}E_{\delta_0} \widehat{\otimes}_{|\pi|} F_{\delta_0}} &\leq \sum_{i=1}^p \sum_{k=1}^{p_i} \norma{\theta_n(U_{\delta_0}(z_i^k))}_{s, |\pi|} \norma{V_{\delta_0}(v_i)} \\
    & = \sum_{i=1}^p \sum_{k=1}^{p_i} \norma{U_{\delta_0}(z_i^k)}^n \norma{V_{\delta_0}(v_i)} \leq \sum_{i=1}^p \sum_{k=1}^{p_i} (1+\delta_0)^{n+1} \norma{z_i^k}^n \norma{v_i}  \\
    & \leq (1+\delta_0)^{n+1}( \norma{u}_{|\pi|} + \frac{\varepsilon}{2}) \leq (1+\varepsilon)^{n+1} (\norma{\widetilde{w}}_{|\pi|} + \varepsilon) \leq (1+\varepsilon)^{n+1} \norma{\widetilde{w}}_{|\pi|} + 2^n \varepsilon,
\end{align*}
and we are done.
\end{proof}

We will also need the following technical lemma.

\begin{lemma} \label{lemanuclear}
  Let $E$ and $F$ be two Dedekind complete Banach lattices.
 For every $w \in  \widehat{\otimes}_{n, s, |\pi|} E \,\widehat{\otimes}_{|\pi|} F$, there exist sequences $(E_k)_k$ and $(F_k)_k$ of finite-dimensional vector sublattices of $E$ and $F$, respectively, and a sequence $(w_k)_k$ in $\widehat{\otimes}_{n, s, |\pi|} E \,\widehat{\otimes}_{|\pi|} F$ such that $\displaystyle \norma{w}_{|\pi|} = \sum_{k=1}^\infty \norma{w_k}_{|\pi|} < \infty$, $\displaystyle w=\sum_{k=1}^\infty w_k$,
  $w_k \in \otimes_{n, s} E_k \otimes F_k$
and $\norma{w_k}_{\otimes_{n, s} E_k  \,\widehat{\otimes}_{|\pi|} F_k} \leq 2^n \norma{w_k}_{|\pi|} + 2^{n-k}$ for every $k \in \N$.  Moreover, $\displaystyle \norma{w_k}_{\otimes_{n, s} E_k  \,\widehat{\otimes}_{|\pi|} F_k} \leq \frac{2^n \cdot 3}{2^k}$ holds for every $k \in \N$ whenever $\norma{w}_{|\pi|} = 1$.
\end{lemma}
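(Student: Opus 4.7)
The plan is to construct the decomposition from a near-optimal positive representation of $w$ in the Fremlin tensor product, then group the representing tensors into blocks with geometrically decreasing tails.

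Write $G = \widehat{\otimes}_{n,s,|\pi|} E^\ast$, so that $w \in G \widehat{\otimes}_{|\pi|} F$. I would first treat the case $\norma{w}_{|\pi|}=1$ (from which the ``moreover'' clause falls out simultaneously with the general bound). Since $G \widehat{\otimes}_{|\pi|} F$ is a Banach lattice, write $w = w^+ - w^-$ with $\norma{w^\pm}_{|\pi|} \leq 1$. Using the infimum definition of the positive projective norm, fix representations
$$w^\pm = \sum_{i=1}^\infty u_i^\pm \otimes y_i^\pm, \qquad u_i^\pm \in G^+,\ y_i^\pm \in F^+,$$
with $\sum_i \norma{u_i^\pm}\norma{y_i^\pm}$ arbitrarily close to $\norma{w^\pm}_{|\pi|}$. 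Pick $0 = N_0 < N_1 < N_2 < \cdots$ so that the tails satisfy $\sum_{i > N_k}\bigl(\norma{u_i^+}\norma{y_i^+} + \norma{u_i^-}\norma{y_i^-}\bigr) \leq 2^{-k-1}$ for every $k \geq 1$, and define
$$w_k := \sum_{i=N_{k-1}+1}^{N_k}\bigl(u_i^+ \otimes y_i^+ - u_i^- \otimes y_i^-\bigr).$$
Then $w = \sum_{k=1}^\infty w_k$ converges in $|\pi|$-norm, each $w_k$ is a finite algebraic tensor, and letting $F_k$ be a finite-dimensional vector sublattice of $F$ containing $\{y_i^\pm : N_{k-1} < i \leq N_k\}$, we have $w_k \in \widehat{\otimes}_{n,s,|\pi|} E^\ast \otimes F_k$.

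For the norm estimates, the inequality $|w_k| \leq \sum_{N_{k-1}<i\leq N_k}\bigl(u_i^+ \otimes y_i^+ + u_i^- \otimes y_i^-\bigr)$ holds with all second factors in $F_k^+$, giving
$$\norma{w_k}_{\widehat{\otimes}_{n,s,|\pi|} E^\ast \,\widehat{\otimes}_{|\pi|} F_k} \leq \sum_{N_{k-1}<i\leq N_k}\bigl(\norma{u_i^+}\norma{y_i^+} + \norma{u_i^-}\norma{y_i^-}\bigr).$$
For $k \geq 2$ the right-hand side is bounded by the tail at $N_{k-1}$, namely $2^{-k}$, which already satisfies both the main bound $\leq 2\norma{w_k}_{|\pi|} + 2^{1-k}$ and the ``moreover'' bound $\leq 6/2^k$. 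For $k = 1$ the right-hand side is at most $\norma{w^+}_{|\pi|}+\norma{w^-}_{|\pi|}+o(1) \leq 2$, which combined with the lower bound $\norma{w_1}_{|\pi|} \geq \norma{w}_{|\pi|} - \norma{w - w_1}_{|\pi|} \geq 3/4$ yields both $\norma{w_1}_{F_1} \leq 2\norma{w_1}_{|\pi|} + 1$ and $\norma{w_1}_{F_1} \leq 3 = 6/2^1$. The equality $\sum_k \norma{w_k}_{|\pi|} = \norma{w}_{|\pi|}$ comes from the triangle inequality $\norma{w}_{|\pi|} \leq \sum_k \norma{w_k}_{|\pi|}$ together with the near-optimality of the chosen representation (letting the approximation slack tend to $0$).

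The main technical obstacle is guaranteeing that $F_k$ is genuinely finite-dimensional, since the vector sublattice of a Banach lattice generated by finitely many positive elements need not be finite-dim in general. This is handled by working inside the principal ideal of $F$ generated by $\sum_{N_{k-1}<i\leq N_k}(y_i^+ + y_i^-)$, which is an AM-space by Kakutani's representation theorem, and realizing $F_k$ as a finite-dim vector sublattice there; alternatively, one refines the block construction so that the elements $y_i^\pm$ in each block are drawn from a pre-selected finite-dim sublattice of $F$.
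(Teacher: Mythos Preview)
Your proposal has a fundamental gap at the very first step. The positive projective (Fremlin) norm is defined by
\[
\|u\|_{|\pi|}=\inf\Bigl\{\sum_i\|x_i\|\,\|y_i\|:\ |u|\le\sum_i x_i\otimes y_i,\ x_i\in G^+,\ y_i\in F^+\Bigr\},
\]
so the infimum runs over \emph{dominations} $|u|\le\sum_i x_i\otimes y_i$, not over \emph{representations} $u=\sum_i x_i\otimes y_i$. You therefore cannot ``fix representations $w^{\pm}=\sum_i u_i^{\pm}\otimes y_i^{\pm}$'' with positive $u_i^{\pm},y_i^{\pm}$ and nearly optimal $\ell^1$-mass: a generic positive element of $G\,\widehat{\otimes}_{|\pi|}\,F$ need not be a (finite or countable) sum of positive elementary tensors at all, since already in the Riesz tensor product $G\,\overline{\otimes}\,F$ the algebraic tensor product $G\otimes F$ is not a sublattice. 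Without such an equality there is no way to define blocks $w_k$ that actually sum to $w$; replacing equality by the available inequality $|w^{\pm}|\le\sum_i u_i^{\pm}\otimes y_i^{\pm}$ only yields $|w|\le\sum_k|w_k|$, which is useless for the conclusions $w=\sum_k w_k$ and $\|w\|_{|\pi|}=\sum_k\|w_k\|_{|\pi|}$.

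The finite-dimensional sublattice difficulty you flag at the end is also not resolved by your suggestions: the vector sublattice generated by finitely many positive elements in an AM-space (think of $1$ and $t$ in $C[0,1]$) is typically infinite-dimensional, so passing to a principal ideal does not help. The paper sidesteps both obstacles by quoting \cite[Lemma~3.5]{blancolal}, which, given any $v\in G\,\widehat{\otimes}_{|\pi|}\,F$ and $\varepsilon>0$, produces a finite-dimensional sublattice $H\subseteq F$ and an algebraic tensor $v'\in G\otimes H$ with $\|v-v'\|_{|\pi|}\le\varepsilon$ and $\|v'\|_{G\,\widehat{\otimes}_{|\pi|}\,H}\le 2\|v'\|_{|\pi|}+\varepsilon$. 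Applying this lemma recursively to the remainders $w-w_1-\cdots-w_{k-1}$ with $\varepsilon=2^{-k}$ manufactures the $F_k$ and $w_k$ directly; the genuine work (approximation by an element living over a \emph{finite-dimensional} sublattice, with controlled norm in the smaller tensor product) is all inside that lemma and is not reproduced by your block-grouping idea.
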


\begin{proof}
      Let $w \in \widehat{\otimes}_{n, s, |\pi|} E  \,\widehat{\otimes}_{|\pi|} F$ be given. 
      By Lemma \ref{lema31}, there exist finite dimensional vector sublattices
$E_1 \subset E$ and $F_1 \subset F$, and a vector $w_1 \in \otimes_{n, s} E_1  \,
    \otimes F_1$ such that $\norma{w - w_1}_{|\pi|} \leq 2^{-1}$ and  
  $$\norma{w_1}_{\otimes_{n, s} E_1 \,\widehat{\otimes}_{|\pi|} F_1} \leq \left(1+
  \frac{1}{2}\right)^n\norma{w_1}_{|\pi|} + 2^n \dfrac{1}{2} \leq 2^n
  \norma{w_1}_{|\pi|} + 2^{n-1}.$$
Using Lemma \ref{lema31} again, there exist finite
dimensional vector sublattices $E_2 \subset E$
and $F_2 \subset F$,
and a vector $w_2 \in \otimes_{n, s} E_2 \,\otimes F_2$ such that $\norma{(w - w_1) - w_2}_{|\pi|} \leq 2^{-2}$ and
$$\norma{w_2}_{\otimes_{n, s} E_2 \,\widehat{\otimes}_{|\pi|} F_2} \leq \left(1+
       \frac{1}{2^2}\right)^n\norma{w_2}_{|\pi|} + 2^n \frac{1}{2^2} \leq 2^n
       \norma{w_2}_{|\pi|} + 2^{n-2}. $$
 Proceeding inductively, we may construct sequences $(E_k)_k$ and $(F_k)_k$ of finite dimensional vectors sublattices of $E$ and $F$, respectively, and a sequence $(w_k)_k \subset \widehat{\otimes}_{n, s, |\pi|} E^\ast \,\widehat{\otimes}_{|\pi|} F$ such that $w_k \in \otimes_{n, s} E_k \,
    \otimes F_k$, $\norma{(w - w_1 - \cdots - w_{k-1}) - w_k}_{|\pi|} \leq 2^{-k}$ and $$\norma{w_k}_{\otimes_{n, s} E_k \,\widehat{\otimes}_{|\pi|} F_k} \leq 2^n\norma{w_k}_{|\pi|} + 2^{n-k} \, \text{ for every $k > 1$.}$$ 
     Thus,
 $$ \left | \norma{w}_{|\pi|} -  \sum_{i=1}^k \norma{w_i}_{|\pi|}  \right | \leq \left\|w - \sum_{i=1}^k  w_i\right\|_{|\pi|} \leq 2^{-k} $$
    holds for every $k \in \N$.
 By letting $k \to \infty$, we get that $\displaystyle \norma{w}_{|\pi|} = \sum_{i=1}^\infty \norma{w_i}_{|\pi|} < \infty$ and
$w=\displaystyle\sum_{k=1}^\infty w_k$. We also note  that
   $\displaystyle \norma{w_{k+1}}_{|\pi|} \leq \norma{w}_{|\pi|} -  \sum_{i=1}^k
   \norma{w_i}_{|\pi|} \leq 2^{-k} $ holds for every $k \in \N$, and so
\begin{align*}
  \norma{w_k}_{\otimes_{n, s} E_k \,\widehat{\otimes}_{|\pi|} F_k} & \leq 2^n\norma{w_k}_{|\pi|} + 2^{n-k}  \leq 2^n \, 2^{-(k-1)} + 2^{n-k} \\
  & = 2^n \, 2^{-k+1} + 2^{n-k} = 2^{n-k}(2 + 1) = \frac{2^n \cdot 3}{2^k}
\end{align*}
for every $k \geq 2.$ Assuming $\norma{w}_{|\pi|} = 1$, for $k =1$, we have that
  $$ \norma{w_1}_{\otimes_{n, s} E^\ast \,\widehat{\otimes}_{|\pi|} F_1} \leq 2^n \norma{w_1}_{|\pi|} + 2^{n-1} \leq 2^n \left( \norma{w}_{|\pi|} + \frac{1}{2}\right)
  \leq  \displaystyle \frac{2^n \cdot 3}{2^1}, $$
and we are done.
\end{proof}

It was proved in \cite{blancolal} that, whenever $F$ is a Dedekind complete Banach lattice with the LAP, then $\overline{\Phi}\colon E^\ast \,\widehat{
  \otimes}_{|\pi|} F \to \mathcal{N}^r(E; F)$ is one-to-one, and hence an isometry, for every Banach lattice $E$. The same argument used in Blanco's proof can be adapt to obtain a polynomial version of this result that will be presented in our next theorem. First, we recall that  $\mathcal{P}_f(^n E; F)$ denotes the
vector space of all finite rank $n$-homogeneous polynomials from $E$ into
$F$.

\begin{theorem} \label{teonuclear}
    Let $E$ and $F$ be Banach lattices with $F$ Dedekind complete. If $E^*$ and $F$ have the LAP, then  the map $\overline{\Phi}_n\colon\widehat{\otimes}_{n, s, |\pi|} E^\ast \,\widehat{\otimes}_{|\pi|} F \to \mathcal{P}_{\mathcal{N}}^r(^n E; F)$ is injective, and hence an isometry.
\end{theorem}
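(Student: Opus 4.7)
The plan is to adapt Blanco's injectivity argument from the linear ($n=1$) case to $n$-homogeneous polynomials. Assume $w \in \widehat{\otimes}_{n,s,|\pi|} E^\ast \,\widehat{\otimes}_{|\pi|} F$ satisfies $\overline{\Phi}_n(w) = 0$; I would aim to conclude $w = 0$ by normalizing $\|w\|_{|\pi|} = 1$ and deriving a contradiction. The first move is to invoke Lemma \ref{lemanuclear} to write $w = \sum_{k=1}^\infty w_k$ with $w_k \in \widehat{\otimes}_{n,s,|\pi|}E^\ast \otimes F_k$ for finite-dimensional vector sublattices $F_k \subset F$ and the norm control $\|w_k\|_{\widehat{\otimes}_{n,s,|\pi|}E^\ast \widehat{\otimes}_{|\pi|} F_k} \leq 6/2^k$. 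Next I would choose, in each $F_k$, a strong order unit $e_k$ rescaled so that $\|e_k\| \to 0$ while the comparison constant $C_k$ between the ambient $F$-norm and the $e_k$-order-unit-seminorm on $F_k$ still satisfies $\sum_k C_k/2^k < \infty$; this can be arranged since each $F_k$ is finite-dimensional.

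The LAP of $F$ applied to the sequence of principal pairs $(F_k,e_k)$ then yields, for each $\varepsilon > 0$, a finite-rank operator $T_\varepsilon\colon F \to F$ with $\||(T_\varepsilon - \mathrm{id})|_{F_k}|(e_k)\| < \varepsilon$ uniformly in $k$, which translates into the regular-norm bound $\|(T_\varepsilon-\mathrm{id})|_{F_k}\|_r \leq C_k \varepsilon$. The central intertwining identity, checked on generators $\theta_n(x^*)\otimes y$ and extended by linearity and continuity, is $\overline{\Phi}_n \circ (\mathrm{id}\otimes S) = S \circ \overline{\Phi}_n$ for every regular $S\colon F\to F$, so $\overline{\Phi}_n((\mathrm{id}\otimes T_\varepsilon)(w)) = T_\varepsilon\circ\overline{\Phi}_n(w) = 0$. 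Since $(\mathrm{id}\otimes T_\varepsilon)(w)$ lives in $\widehat{\otimes}_{n,s,|\pi|}E^\ast \otimes T_\varepsilon(F)$ with $T_\varepsilon(F)$ finite-dimensional, I would prove that $\overline{\Phi}_n$ is injective on this subspace: expanding in a basis $z_1,\dots,z_m$ of $T_\varepsilon(F)$ reduces the vanishing of $\overline{\Phi}_n(\sum_i u_i\otimes z_i)$ to $Q_{u_i}\equiv 0$ on $E$ for each $i$, and then the identification $\mathcal{P}^r(^n E) \cong (\widehat{\otimes}_{n,s,|\pi|}E)^*$ (induced by the $\Phi$ of Section~1 with scalar target) together with the fact that $\{\theta_n(x):x\in E\}$ generates $\widehat{\otimes}_{n,s,|\pi|}E$ as a Banach lattice forces $u_i = 0$. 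Hence $(\mathrm{id}\otimes T_\varepsilon)(w) = 0$.

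To conclude, the decomposition of $w$ combined with the regular-norm bound gives
$$ \|w\|_{|\pi|} \;=\; \|w - (\mathrm{id}\otimes T_\varepsilon)(w)\|_{|\pi|} \;\leq\; \sum_{k=1}^\infty \|(\mathrm{id}\otimes (T_\varepsilon-\mathrm{id})|_{F_k})(w_k)\|_{|\pi|} \;\leq\; 6\varepsilon \sum_{k=1}^\infty \frac{C_k}{2^k}, $$
which tends to $0$ as $\varepsilon \to 0$, contradicting $\|w\|_{|\pi|}=1$. The hardest step is the finite-dimensional injectivity claim: for $n=1$ it is tautological, but for $n \geq 2$ it requires that $Q_u=0$ forces $u=0$ in $\widehat{\otimes}_{n,s,|\pi|}E^\ast$, and the parallel statement for ordinary projective tensor products can fail without an approximation property, so one must genuinely exploit the positive-projective symmetric structure and the density of $\{\theta_n(x)\}$. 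A secondary subtlety is the bookkeeping in the choice of the $e_k$, balancing $\|e_k\|\to 0$ against the summability of $\sum_k C_k/2^k$, without which the final estimate collapses.
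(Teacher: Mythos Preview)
Your overall strategy matches the paper's (and Blanco's): normalize $w$, decompose via Lemma~\ref{lemanuclear}, push through a finite-rank $T$ supplied by the LAP, use injectivity on the finite-dimensional-range piece to get $(\mathrm{id}\otimes T)(w)=0$, and then estimate $\|w\|_{|\pi|}$ by $\varepsilon$ times a convergent series. The intertwining identity and the reduction to $\Psi(w)\in\widehat{\otimes}_{n,s,|\pi|}E^*\otimes T(F)$ are exactly as in the paper.

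The gap is in your construction of the principal pairs. You take a \emph{single} strong order unit $e_k$ for each $F_k$ and assert that one can arrange $\|e_k\|\to 0$ together with $\sum_k C_k/2^k<\infty$. But the product $C_k\|e_k\|$ is invariant under rescaling $e_k$, and its infimum over choices of $e_k$ is a geometric invariant of $F_k$ that you have no control over: for $F_k$ lattice-isometric to $\ell_1^{d_k}$ one has $C_k\|e_k\|\ge d_k$ for \emph{every} strong order unit, so if the dimensions $d_k$ produced by Lemma~\ref{lemanuclear} grow like $2^k$ the two requirements are incompatible. Finite-dimensionality of each $F_k$ alone does not save you.

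The paper avoids this by a further decomposition: from $\|w_k\|_{\widehat{\otimes}_{n,s,|\pi|}E^*\widehat\otimes_{|\pi|}F_k}\le 6/2^k$ it extracts a concrete domination $|w_k|\le\sum_{j=1}^{m_k}z_{k,j}\otimes y_{k,j}$ with $\sum_j\|z_{k,j}\|\,\|y_{k,j}\|\le 6/2^k$, then rescales to $u_{k,j}=k\|y_{k,j}\|z_{k,j}$ and $v_{k,j}=y_{k,j}/(k\|y_{k,j}\|)$, and uses the principal pairs $(F_{k,j},v_{k,j})$ with $F_{k,j}$ the ideal generated by $v_{k,j}$. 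Now $\max_j\|v_{k,j}\|\to 0$ and $\sum_{k,j}\|u_{k,j}\|\le\sum_k 6k/2^k<\infty$ are automatic, and the final estimate
\[
\|w\|_{|\pi|}=\Bigl\|\sum_k(\mathrm{id}\otimes(\mathrm{id}-T))w_k\Bigr\|_{|\pi|}\le\sum_{k,j}\|u_{k,j}\|\,\bigl\||(T-\mathrm{id})|_{F_{k,j}}|(v_{k,j})\bigr\|<\varepsilon\sum_{k,j}\|u_{k,j}\|
\]
needs no comparison constants at all. This is the step you are missing: the principal pairs must come from the tensor representation of each $w_k$, not from an abstract order unit of $F_k$.

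On the finite-dimensional injectivity: you correctly flag that for $n\ge 2$ the claim ``$Q_u=0\Rightarrow u=0$ in $\widehat{\otimes}_{n,s,|\pi|}E^*$'' is nontrivial and not an immediate consequence of $\mathcal{P}^r(^nE)\cong(\widehat{\otimes}_{n,s,|\pi|}E)^*$, since that identification lives on the wrong side of the pairing. The paper simply asserts that $\Phi_n\colon\widehat{\otimes}_{n,s,|\pi|}E^*\otimes F\to\mathcal{P}_f(^nE;F)$ is a vector-space isomorphism; your proposed justification does not close this either, so be aware that a separate argument (or reference) is needed here in any case.
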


\begin{proof} Suppose for the sake of contradiction that $\overline{\Phi}_n$
  is not injective. Thus, there exists
  $w \in \widehat{\otimes}_{n, s, |\pi|} E^\ast \,\widehat{\otimes}_{|\pi|} F$ with $\norma{w}_{|\pi|} = 1$ such that $\overline{\Phi}_n(w) = 0$.  Since $E^*$ and $F$ are Dedekind complete,
  we can apply Lemma \ref{lemanuclear} to obtain sequences $(G_k)_k$ and $(F_k)_k$ of finite-dimensional vector sublattices of $E^*$ and $F$, respectively, and the corresponding sequence $(w_k)_k \subset \widehat{\otimes}_{n,s, |\pi|} E^* \widehat{\otimes}_{|\pi|} F$. 
  Since, for every $k \in \N$,
   $w_k \in \otimes_{n, s} G_k 
  \otimes F_k$ and $\norma{w_k}_{\otimes_{n, s} G_k \,\widehat{\otimes}_{|\pi|} F_k} \leq 2^n \norma{w_k}_{|\pi|} + 2^{n-k} \leq \displaystyle \frac{2^n \cdot 3}{2^k}$, there are a positive elements
  $z_{k,1}, \dots, z_{k, m_k} \in \otimes_{n, s} G_k$
    and positive vectors $y_{k,1}, \dots, y_{k, m_k} \in F_k $ such that 
    $$|w_{k}| \leq \displaystyle \sum_{j=1}^{m_k} z_{k,j} \otimes y_{k,j} \quad \text{  
    and} \quad \displaystyle \sum_{j=1}^{m_k} \norma{z_{k,j}}_{s, |\pi|} \norma{ y_{k,j}} \leq \frac{2^n \cdot 3}{2^k}.$$
\textbf{Claim 1:} For each $k \in \N$, there are $0 \leq u_{k,1}, \dots, u_{k, m_k}$ $\in$ $\otimes_{n, s} G_k$ and $0 \leq v_{k,1}, \dots, v_{k, m_k} \in
     F_k $ such that
     \begin{equation} \label{eq1}
         |w_k| \leq \sum_{j=1}^{m_k} u_{k,j}
     \otimes v_{k,j}, \, \max_{1 \leq j \leq m_k}
     \norma{v_{k,j}} = \frac{1}{k}, \,  \sum_{k=1}^\infty \sum_{j=1}^{m_k} \norma{u_{k,j}}_{s, |\pi|} < \infty.
     \end{equation}     
     Indeed, first note that we can assume that $y_{k,j}
     \not= 0$ as if it is
     equal to  $0$, then $u_{k,j}\otimes z_{k,j}=0$. For each $k \in
     \N$ and each $j = 1, \dots, m_k$, we let 
     $$ u_{k,j} = k \norma{y_{k,j}} z_{k,j} \quad \text{and} \quad
     v_{k,j} = \frac{y_{k,j}}{k \norma{y_{k,j}}}.$$
     Thus, $0 \leq u_{k,1}, \dots, u_{k, m_k}$ $\in$ $\otimes_{n, s} G_k$ and $0 \leq y_{k,1}, \dots, y_{k, m_k} \in
     F_k $ satisfy
     $$ |w_k| \leq \sum_{j=1}^{m_k} z_{k,j} \otimes y_{k,j} =  \sum_{j=1}^{m_k} u_{k,j}
     \otimes v_{k,j}, \quad \max_{1 \leq j \leq m_k}
     \norma{v_{k,j}} = \frac{1}{k}, $$
and
$$ \sum_{k=1}^\infty \sum_{j=1}^{m_k} \norma{u_{k,j}}_{s, |\pi|} =
\sum_{k=1}^\infty \sum_{j=1}^{m_k} k \norma{z_{k,j}}_{s, |\pi|} \norma{ y_{k,j}} \leq  \sum_{k=1}^\infty \frac{2^n \cdot 3 k}{2^k} < \infty. $$

\noindent {\bf Claim 2:} For each $k \in \N$ and each $j = 1, \dots, m_k$, there are $g_{k,j}^1, \dots, g_{k,j}^{p_{k,j}} \in G_k^+$ and $\widetilde{v}_{k,j}^1, \dots, \widetilde{v}_{k,j}^{p_{k,j}} \in F_k^+$ such that $|w_k| \leq  \displaystyle \sum_{j=1}^{m_k} \sum_{i=1}^{p_{k,j}} \theta_n(g_{k,j}^i)
     \otimes \widetilde{v}_{k,j}^i,$
\begin{equation} \label{eq2}
    \max_{1 \leq j \leq m_k} \max_{1 \leq i \leq p_{k,j}}
     \norma{g_{k,j}^i} = \frac{1}{k} \, \text{ and } \, \sum_{k=1}^\infty \sum_{j=1}^{m_k} \sum_{i=1}^{p_{k,j}} \norma{\widetilde{v}_{k,j}^i} < \infty.
\end{equation}
First,
for each $k \in \N$ and each $j = 1, \dots, m_k$, there are $\varphi_{k,j}^1, \dots, \varphi_{k,j}^{p_{k,j}} \in G_k^+$ such that $z_{k,j} \leq \sum_{i=1}^{p_{k,j}} \theta_n(\varphi_{k,j}^i)$ and
$$ \sum_{i=1}^{p_{k,j}} \norma{\varphi_{k,j}}^n \leq \norma{z_{k,j}}_{s, |\pi|} + \frac{1}{m_k \displaystyle \max_{1 \leq j \leq m_k} \norma{y_{k,j}} \, 2^k}. $$
Next, we note that, as in Claim 1,  we can assume that
$\varphi_{k,j}\not=0$, and so we define
$$ g_{k,j}^i =\frac{\varphi_{k,j}^i}{k \norma{\varphi_{k,j}^i}} \quad \text{and} \quad \widetilde{v}_{k,j}^i := (k \norma{\varphi_{k,j}^i})^n y_{k,j}. $$
$0 \leq g_{k,j}^i \in G_k$ and $0 \leq \widetilde{v}_{k,1}
  ^i, \dots, \widetilde{v}_{k, m_k}^i \in
F_k $ satisfy
$$
|w_k| \leq \sum_{j=1}^{m_k} z_{k,j} \otimes y_{k,j}
  \leq
  \sum_{j=1}^{m_k} \sum_{i=1}^{p_{k,j}} \theta_n(g_{k,j}^i)
  \otimes  {\tilde v}^i_{k,j}, \\
\quad   \max_{1 \leq j \leq m_k} \max_{1 \leq i \leq p_{k,j}}
     \norma{g_{k,j}^i} = \frac{1}{k}, $$
and
\begin{align*}
    \sum_{k=1}^\infty \sum_{j=1}^{m_k} \sum_{i=1}^{p_{k,j}} \norma{\widetilde{v}_{k,j}^i} & =
\sum_{k=1}^\infty \sum_{j=1}^{m_k} \sum_{i=1}^{p_{k,j}} k^n \norma{\varphi_{k,j}^i}^n \norma{y_{k,j}} \\
& \leq \sum_{k=1}^\infty \sum_{j=1}^{m_k} k^n\left(\norma{z_{k,j}}_{s, |\pi|} + \frac{1}{m_k \max_{1 \leq j \leq m_k} \norma{y_{k,j}} 2^k}\right) \norma{y_{k,j}} \\
& \leq \sum_{k=1}^\infty \sum_{j=1}^{m_k} k^n \norma{z_{k,j}}_{s, |\pi|} \norma{y_{k,j}} + \sum_{k=1}^\infty \frac{k^n}{2^k} \\
& \leq  \sum_{k=1}^\infty \frac{2^n \cdot 3 k^n}{2^k} + \sum_{k=1}^\infty
\frac{k^n}{2^k} < \infty.
\end{align*}

For each $k, j, i$, define
 $$G_{k,j}^i := \conj{g \in G_k}{|g| \leq m g_{k,j}^i  \,\text{ for some } m \in \N}$$ and
$$F_{k,j} = \conj{y \in F_k}{|y| \leq m \, v_{k,j} \,\text{ for some } m \in \N}.$$
Ordering $(v_{k,j})$ by 
$$ v_{1, 1}, \dots, v_{1, m_1}, v_{2,1} \dots, v_{2,m_2}, \dots $$
  and $(g_{k,j}^i)$ by
  $$ g_{1,1}^1, \dots, g_{1,1}^{p_1}, g_{1,2}^1, \dots, g_{1,2}^{p_{1,2}}, \dots, g_{1, m_1}^1, \dots, g_{1,m_1}^{p_{1, m_1}}, \dots  $$
we get that $(G_{k,j}^i, g_{k,j}^i)$ and
$((F_{k,j}),v_{k,j})$
are sequences of principal pairs of $E^*$ and $F$, respectively. Let $\varepsilon > 0 $. As $E^*$ and $F$ have the LAP, there are finite rank operators $S \colon E^* \to E^*$ and $T\colon F \to F$ such that 
$$ \norma{|(S - id)_{G_{k,j}^i}|(g_{k,j}^i)} < \varepsilon \, \text{ and } \,   \norma{|(T-id)_{F_{k,j}}|(v_{k,j})} < \varepsilon$$ for all $k,j, i$.

Defining $\Psi\colon \widehat{\otimes}_{n, s, |\pi|} E^\ast \,\widehat{\otimes}_{|\pi|} F \to \widehat{\otimes}_{n, s, |\pi|} E^\ast \,\widehat{\otimes}_{|\pi|} F$ by $\Psi(\theta_n(x^*) \otimes y) = \theta_n(S(x^*)) \otimes T(y)$ for all $x^* \in  E^\ast$ and $y \in F$, and extending by linearity and
  continuity,
    we get that 
$\Psi(w_k) \in \otimes_{n, s} \, S(E^*) \otimes T(F)$
    for every $k \in \N$. To see that $\Psi$ is well-defined we note that if $\sum_{i=1}^k \theta_n(x_i^*) \otimes y_i = \sum_{j=1}^l \theta_n(z_j^*) \otimes w_j$, then
\begin{align*}
    \sum_{i=1}^k \theta_n(Sx_i^*) \otimes Ty_i & = \sum_{i=1}^k\varphi_1^\otimes (\theta_n(x_i^*)) \otimes Ty_i  = \varphi_1^\otimes \otimes T (\sum_{i=1}^k \theta_n(x_i^*) \otimes y_i) \\
    & = \varphi_1^\otimes \otimes T(\sum_{j=1}^l \theta_n(z_j^*) \otimes w_j) = \sum_{j=1}^l \varphi_1^\otimes (\theta_n(z_j^*)) \otimes Tw_j \\
    & = \sum_{j=1}^l \theta_n(Sz_j^*) \otimes Tw_j,
\end{align*}
where $\varphi_1^\otimes$ is the linearization of the regular symmetric $n$-linear map $\varphi_1: E^* \times \cdots \times  E^* \to \widehat{\otimes}_{n, s, |\pi|} E^*$ given by $\varphi_1(x_1^*, \dots, x_n^*) = Sx_1^* \otimes_s \cdots \otimes_s Sx_n^*$.
     
     Moreover, as $\otimes_{n, s}
    S(E^*) \otimes T(F)$ is closed in $\widehat{\otimes}_{n, s, |\pi|} E^\ast \,\widehat{\otimes}_{|\pi|} F$ and $\Psi(w) = \displaystyle \sum_{k=1}^\infty
    \Psi(w_k)$, we have that $\Psi(w) \in \otimes_{n, s} S(E^*) \otimes T(F)$,
    which yields that $\overline{\Phi}_n(\Psi(w)) = \Phi_n(\Psi(w))$, where $\Phi_n$ is the following isomorphism of vector spaces:
      $$ \Phi_n\colon \otimes_{n, s} E^\ast \otimes F \to
      \mathcal{P}_f(^n E; F), \quad \Phi_n(\theta_n(x^*) \otimes y) = (x^*(x))^n y. $$
      Now, considering $\Gamma : \mathcal{P}_{\cal N}(^n E; F) \to \mathcal{P}_{\cal N}(^n E; F)$ given by 
      $$ \Gamma \left(\sum_{i=1}^\infty (x_i^*(\cdot))^n y_i\right) =
      \sum_{i=1}^\infty (Sx_i^*(\cdot))^n Ty_i, $$
        we have that 
        \begin{align*}
            \Phi_n (\Psi ( \theta_n(x^*) \otimes y )) = \Phi_n( \theta_n(Sx^*) \otimes Ty ) = (Sx^*(\cdot))^n Ty = \Gamma ((x^*(\cdot))^n y) = \Gamma(\Phi_n(\theta_n(x^*) \otimes y))
        \end{align*}
        holds for all $x^* \in E^*$ and $y \in F$, hence
        $\Phi_n (\Psi ( w_k )) = \Gamma (\Phi_n (w_k)) $ for every $k \in \N$. Therefore
        \begin{align*}
            \Phi_n(\Psi(w)) & = \overline{\Phi}_n(\Psi(w)) = \sum_{k=1}^\infty \overline{\Phi}_n(\Psi(w_k)) = \sum_{k=1}^\infty \Phi_n(\Psi(w_k))\\
            & =  \sum_{k=1}^\infty \Gamma(\Phi_n(w_k)) = \Gamma \left(
            \sum_{k=1}^\infty \Phi_n (w_k)\right) = \Gamma
            \left(\sum_{k=1}^n \overline{\Phi}_n(w_k)\right) =
            \Gamma(\overline{\Phi}_n(w)) = 0.
        \end{align*}
        Now, since $\Phi_n$ is a linear isomorphism, we obtain that $\Psi(w) = 0$. From this, we will prove that $w = 0$. First, we notice that, for each
        $k \in \N$
    \begin{align*}
        |(id - \Psi)(w_k)| & \leq |(id - id \otimes T)(w_k)| + |(id \otimes T - \Psi)(w_k)| \\
        & = |id \otimes (id_F - T) (w_k)| + |(id - \otimes^n S) \otimes T (w_k)| \\
        & \leq |id \otimes (id_F - T)| (|w_k|) + |(id - \otimes^n S) \otimes T| (|w_k|) \\
        & \leq  |id \otimes (id_F - T)| \left (    \sum_{j=1}^{m_k} u_{k,j}
     \otimes v_{k,j} \right ) + \\ 
     & +|(id - \otimes^n S) \otimes T| \left (  \sum_{j=1}^{m_k} \sum_{i=1}^{p_{k,j}} \theta_n(g_{k,j}^i)
     \otimes \widetilde{v}_{k,j}^i  \right ),
    \end{align*}
where in the last inequality we used the constructions obtained in Claim 1 and Claim 2 above. We will divide our proof in two steps now. First:
\begin{align*}
    |id \otimes (id_F - T)| \left (    \sum_{j=1}^{m_k} u_{k,j}
     \otimes v_{k,j} \right ) & = \sum_{j=1}^{m_k} |id \otimes (id_F - T)| (u_{k,j}
     \otimes v_{k,j}) \\
     & = \sum_{j=1}^{m_k} u_{k,j} \otimes |(id - T)_{F_{k,j}}|(v_{k,j})
\end{align*}
Second:
\begin{align*}
  |(id - \otimes^n S) \otimes T| & \left (  \sum_{j=1}^{m_k} \sum_{i=1}^{p_{k,j}} \theta_n(g_{k,j}^i)
     \otimes \widetilde{v}_{k,j}^i  \right ) = \sum_{j=1}^{m_k} \sum_{i=1}^{p_{k,j}} |(id - \otimes^n S) \otimes T| (\theta_n(g_{k,j}^i)
     \otimes \widetilde{v}_{k,j}^i) \\
     & = \sum_{j=1}^{m_k} \sum_{i=1}^{p_{k,j}} \theta_n(|(id - S)_{G_{k,j}^i}| (g_{k,j}^i)) \otimes T( \widetilde{v}_{k,j}^i)
\end{align*}
Combining the above, we obtain
\begin{align*}
    \norma{w}_{|\pi|} & = \norma{w - \Psi(w)}_{|\pi|} = \norma{(id - \Psi)(\sum_{k=1}^\infty w_k)}_{|\pi|} \leq \sum_{k=1}^\infty \norma{(id - \Psi)(w_k)}_{|\pi|} \\
    & \leq \sum_{k=1}^\infty \sum_{j=1}^{m_k} \norma{u_{k,j}} \norma{|(id - T)_{F_{k,j}}|(v_{k,j})} + \sum_{k=1}^\infty \sum_{j=1}^{m_k} \sum_{i=1}^{p_{k,j}}  \norma{|(id - S)_{G_{k,j}^i}| (g_{k,j}^i)}^n \norma{T( \widetilde{v}_{k,j}^i)} \\
    & \leq  \sum_{k=1}^\infty \sum_{j=1}^{m_k} \norma{u_{k,j}} \varepsilon + \sum_{k=1}^\infty \sum_{j=1}^{m_k} \sum_{i=1}^{p_{k,j}} \varepsilon^n \norma{T} \norma{\widetilde{v}_{k,j}^i}.
\end{align*}
   Taking $\varepsilon \longrightarrow 0$, we obtain $\norma{w}_{|\pi|} = 0$, and we have a contradiction.
\end{proof}

\begin{corollary}  \label{cornuclear}
  Let $E, F, G$ be Banach lattices such that $G$ is Dedekind complete and $E^*, F^*$ and $G$ have the LAP. 
  If $E^*$ and $F^*$ are lattice isomorphic, then  $\mathcal{P}_{\mathcal{N}}^r(^n E; G)$ and $\mathcal{P}_{\mathcal{N}}^r(^n F; G)$ are lattice isomorphic.
\end{corollary}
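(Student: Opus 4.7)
The plan is to combine Theorem \ref{teonuclear} with the functoriality of the positive projective (symmetric) tensor products to push the given lattice isomorphism $\varphi\colon E^\ast \to F^\ast$ all the way up to the spaces of nuclear polynomials. Applying Theorem \ref{teonuclear} with codomain $G^\ast$ (Dedekind complete as a dual, inheriting the LAP hypothesis needed via $G$) produces isometric lattice isomorphisms
$$ \overline{\Phi}_n^E\colon \widehat{\otimes}_{n, s, |\pi|} E^\ast \,\widehat{\otimes}_{|\pi|} G^\ast \to \mathcal{P}_{\mathcal{N}}^r(^n E; G^\ast), \quad \overline{\Phi}_n^F\colon \widehat{\otimes}_{n, s, |\pi|} F^\ast \,\widehat{\otimes}_{|\pi|} G^\ast \to \mathcal{P}_{\mathcal{N}}^r(^n F; G^\ast), $$
so it suffices to produce a lattice isomorphism between the two tensor products on the left.

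First, I would lift $\varphi$ to the $n$-fold symmetric tensor product. Since $\theta_n^F\colon F^\ast \to \widehat{\otimes}_{n, s, |\pi|} F^\ast$ is a lattice homomorphism and $\varphi$ is positive, the composition $\theta_n^F \circ \varphi\colon E^\ast \to \widehat{\otimes}_{n, s, |\pi|} F^\ast$ is a positive $n$-homogeneous polynomial. By Proposition 3.4 of \cite{bubuskes}, it linearizes to a positive bounded linear operator $\widetilde{\varphi}\colon \widehat{\otimes}_{n, s, |\pi|} E^\ast \to \widehat{\otimes}_{n, s, |\pi|} F^\ast$ satisfying $\widetilde{\varphi}(\otimes^n x^\ast) = \otimes^n \varphi(x^\ast)$ for every $x^\ast \in E^\ast$. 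Carrying out the same construction with $\varphi^{-1}$, which is positive by \cite[Theorem 2.15]{alip}, produces a positive operator that agrees with the two-sided inverse of $\widetilde{\varphi}$ on the generators and hence globally. Therefore \cite[Theorem 2.15]{alip} makes $\widetilde{\varphi}$ a lattice isomorphism.

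Next, I would tensor with the identity on $G^\ast$. The bilinear map $(u, y) \mapsto \widetilde{\varphi}(u) \otimes y$ from $\widehat{\otimes}_{n, s, |\pi|} E^\ast \times G^\ast$ into $\widehat{\otimes}_{n, s, |\pi|} F^\ast \,\widehat{\otimes}_{|\pi|} G^\ast$ is positive and continuous, so by Proposition 3.3 of \cite{bubuskes} it linearizes to a positive bounded linear operator
$$ \Psi\colon \widehat{\otimes}_{n, s, |\pi|} E^\ast \,\widehat{\otimes}_{|\pi|} G^\ast \to \widehat{\otimes}_{n, s, |\pi|} F^\ast \,\widehat{\otimes}_{|\pi|} G^\ast, \quad \Psi(u \otimes y) = \widetilde{\varphi}(u) \otimes y. $$
Using $\widetilde{\varphi}^{-1}$ in the same recipe yields the positive two-sided inverse of $\Psi$, so $\Psi$ is a lattice isomorphism by another application of \cite[Theorem 2.15]{alip}. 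The composition $\overline{\Phi}_n^F \circ \Psi \circ (\overline{\Phi}_n^E)^{-1}$ is then the desired lattice isomorphism between $\mathcal{P}_{\mathcal{N}}^r(^n E; G^\ast)$ and $\mathcal{P}_{\mathcal{N}}^r(^n F; G^\ast)$.

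The main obstacle I expect is verifying that the two linearization steps truly produce lattice isomorphisms and not merely positive bijections. The trick is to keep $\varphi$ and $\varphi^{-1}$ on an equal footing throughout: positivity of both gives a positive inverse at every stage, at which point \cite[Theorem 2.15]{alip} does the lattice-theoretic work. The role of Theorem \ref{teonuclear} is exactly to ensure that $\overline{\Phi}_n^E$ and $\overline{\Phi}_n^F$ are honest lattice isomorphisms (rather than quotient maps), so that the tensor-product lattice isomorphism $\Psi$ can be transferred back to the polynomial side without collapsing.
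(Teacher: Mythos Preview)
Your argument follows the same route as the paper: linearize $\theta_n^F \circ \varphi$ to lift $\varphi$ to the symmetric tensor product, then tensor with the identity on the codomain (the paper abbreviates this second step to ``an adaptation of the above argument can be used''), and finally invoke Theorem \ref{teonuclear} together with \cite[Theorem 2.15]{alip}. The only soft spot is your parenthetical assertion that $G^\ast$ inherits the LAP from $G$: this is neither obvious nor established, and indeed the paper's own proof (as well as the introduction) concludes with $\mathcal{P}_{\mathcal{N}}^r(^n E; G)$ and $\mathcal{P}_{\mathcal{N}}^r(^n F; G)$, so the $G^\ast$ in the displayed statement appears to be a typo for $G$---with that correction your proof matches the paper's without any gap.
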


\begin{proof}
   If $E^*$ and $F^*$ are lattice isomorphic, then $\widehat{\otimes}_{n, s, |\pi|} E^\ast$ and $\widehat{\otimes}_{n, s, |\pi|} F^\ast$ 
   are lattice isomorphic. Indeed, if $\varphi\colon E^* \to F^*$ is a
   lattice isomorphism, then $P(x^*) = \theta_n(\varphi(x^*))$ defines a positive $n$-homogeneous polynomial from $E$ into $\widehat{\otimes}_{n, s, |\pi|} F^\ast$, and so $P^\otimes\colon \widehat{\otimes}_{n, s, |\pi|} E^\ast \to
   \widehat{\otimes}_{n, s, |\pi|} F^\ast $ is a positive linear operator such that $P^\otimes(\theta_n(x^*)) = \theta_n(\varphi(x^*))$ for every $x^* \in E^*$. Analogously, there exists a positive operator $S\colon
   \widehat{\otimes}_{n, s, |\pi|} F^\ast \to \widehat{\otimes}_{n, s, |\pi|} E^\ast $ such that $S(y^*) = \theta_n(\varphi^{-1}(y^*))$ for every $y^* \in F^*$. Finally, since
   $$ S \circ P^\otimes (\theta_n(x^*)) = S(\theta_n(\varphi(x^*))) = \theta_n(\varphi^{-1}(\varphi(x^*))) = \theta_n(x^*) $$
   holds for every $x^* \in E^*$, we obtain that $S \circ P^\otimes (z) = z$ for every $z \in \widehat{\otimes}_{n, s, |\pi|} E^\ast$. The same idea also proves that $P^\otimes \circ S (z) = z$
   for every $z \in \widehat{\otimes}_{n, s, |\pi|} F^\ast$. Thus, $(P^\otimes)^{-1} = S$, and by \cite[Theorem 2.15]{alip} $P^\otimes$ is a lattice isomorphism. An adaptation of the above argument can be used to prove that $\widehat{\otimes}_{n, s, |\pi|} E^\ast \,\widehat{\otimes}_{|\pi|} G$ is
  lattice isomorphic to $\widehat{\otimes}_{n, s, |\pi|} F^\ast \,\widehat
  {\otimes}_{|\pi|} G$
. Now, by applying Theorem \ref{teonuclear}, we get that $\mathcal{P}_{\mathcal{N}}^r(^n E; G)$ and $\mathcal{P}_{\mathcal{N}}^r(^n F; G)$ are lattice isomorphic.
\end{proof}

We conclude the article by providing an example where
Corollary \ref{cornuclear} can be applied:
\begin{example}
    It follows from \cite[Remark 4.3]{bupams} that every atomic Banach lattice with an order continuous
norm has the LAP. So, letting $E$ and $F$ be $c_0, c$ or the AM-space $X$ from Example \ref{exteo}, we obtain that $E^*$ and $F^*$ are both lattice isomorphic to $\ell_1$, hence $E^*$ and $F^*$ are lattice isomorphic with the LAP. Now, if $G$ is an arbitrary atomic Banach lattice with order continuous norm, we conclude by Corollary \ref{cornuclear} that $\mathcal{P}^r_{\cal N}(^n E; G)$ and $\mathcal{P}^r_{\cal N}(^n F; G)$ are lattice isomorphic.
\end{example}

\noindent C. Boyd\\
Department of Mathematics and Statistics\\
University College Dublin\\
Belfield -- Dublin 4 -- Ireland\\
e-mail: christopher.boyd@ucd.ie

\medskip

\noindent V. C. C. Miranda\\
Departamento de Matemática\\
Instituto de Ciências Matemáticas e de Computação \\
Universidade de São Paulo \\
13566-590 -- São Carlos - SP -- Brazil  \\
e-mail: viniciusmiranda@icmc.usp.br

\end{document}